\newtheorem{theorem}{Theorem}[section]
\newtheorem{lemma}[theorem]{Lemma}
\newtheorem{corollary}[theorem]{Corollary}
\newtheorem{proposition}[theorem]{Proposition}
\theoremstyle{definition}
\newtheorem{definition}[theorem]{Definition}
\newtheorem{example}[theorem]{Example}
\theoremstyle{remark}
\numberwithin{equation}{section}
\newcommand{\GL}{\operatorname{GL}}
\newcommand{\SL}{\operatorname{SL}}
\newcommand{\Gr}{\operatorname{Gr}}
\newcommand{\Rep}{\operatorname{Rep}}
\newcommand{\Hom}{\operatorname{Hom}}
\newcommand{\ext}{\operatorname{ext}}
\newcommand{\Coker}{\operatorname{Coker}}
\newcommand{\rank}{\operatorname{rank}}
\newcommand{\innerprod}[1]{\langle#1\rangle}
\newcommand{\Innerprod}[1]{\langle\!\langle#1\rangle\!\rangle}
\newcommand{\sm}[1]{\left(\begin{smallmatrix}#1\end{smallmatrix}\right)}
\newcommand{\mb}[1]{\mathbb{#1}}
\newcommand{\mc}[1]{\mathcal{#1}}
\newcommand{\op}[1]{\operatorname{#1}}
\newcommand{\gha}{{\gamma\hookrightarrow\alpha}}
\newcommand{\gca}{{\gamma,\alpha}}
\newcommand{\Km}{
\vcenter{\xymatrix@C=5ex{
1 \ar@<2ex>[rr]^{a_1}="a"  \ar[rr]|{a_i}="b" \ar@<-2ex>[rr]_{a_m}="c" && 2
\ar @{.} "a";"b" \ar @{.} "b";"c"
}}}
\begin{document}

\title{On Some Quiver Determinantal Varieties}
\author{Jiarui Fei}
\address{Department of Mathematics, University of California, Riverside, CA 92521, USA}
\email{jiarui@ucr.edu}
\thanks{}
\dedicatory{Dedicated to Professor Jerzy Weyman on the Occasion of his Sixtieth Birthday}

\subjclass[2010]{Primary 13D02, 14M12; Secondary 16G20, 20C30}

\date{}
\keywords{Quiver Determinantal Variety, Free Resolution, Quiver Representation, Cohen-Macaulay Module, Kronecker Coefficient, Tensor Invariants, Semi-invariant}

\begin{abstract} We introduce certain quiver analogue of the determinantal variety.
We study the Kempf-Lascoux-Weyman complex associated to a line bundle on the variety.
In the case of generalized Kronecker quivers, we give a sufficient condition on when the complex resolves a maximal Cohen-Macaulay module supported on the quiver determinantal variety. This allows us to find the set-theoretical defining equations of these varieties.
When the variety has codimension one, the only irreducible polynomial function is a relative tensor invariant.
As a by-product, we find some vanishing condition for the Kronecker coefficients.
In the end, we make a generalization from the quiver setting to the tensor setting.
\end{abstract}

\maketitle
\section*{Introduction}
We work over a field $k$ of characteristic 0. Let $Q$ be some finite quiver with vertex set $Q_0$ and arrow set $Q_1$. For some dimension vector $\alpha$ of $Q$, let $\Rep_\alpha(Q)$ be the space of all $\alpha$-dimensional representations of $Q$. The product of general linear group $\GL_\alpha=\prod_{v\in Q_0} \GL_{\alpha_v}$ acts naturally on $\Rep_\alpha(Q)$.
For another dimension vector $\gamma$, we consider the variety
$$\Rep_{\gha}(Q):=\{M\in\Rep_\alpha(Q)\mid M \text{ has a $\gamma$-dimensional subrepresentation} \}.$$
When $Q$ is the Dynkin $A_2$-quiver, this is a usual determinantal variety. So in this sense, it is a certain quiver generalization of usual determinantal varieties.
Another instance of such varieties is that they appear as {\em exceptional varieties} \cite{Fm} and irreducible components of the {\em null-cone} for the $\GL_\alpha^\sigma$-action on $\Rep_\alpha(Q)$. Here, $\GL_\alpha^\sigma$ is certain codimension one subgroup of $\GL_\alpha$.
In general, the variety $\Rep_{\gha}(Q)$ is highly singular, but it is easy to construct certain Springer-type resolution.

Let $\Gr\sm{\alpha\\ \gamma}$ be the product of Grassmannian varieties $\prod_{v\in Q_0} \Gr \sm{\alpha(v)\\ \gamma(v)}$. Consider
$$Z=\{(L,M)\in \Gr\sm{\alpha\\ \gamma}\times \Rep_\alpha(Q) \mid L \text{ is a subrepresentation of }  M\}.$$
We have the following correspondence, where $p$ is the structure map of a vector bundle and $q$ is the desingularization.
$$\xymatrix{ & Z \ar[dl]_{p} \ar[dr]^{q} & \\
\Gr\sm{\alpha\\ \gamma} && \Rep_\gha(Q)}$$
Moreover, $Z$ can be realized as the total space of some subbundle of the trivial vector bundle $\Gr\sm{\alpha\\ \gamma}\times \Rep_\alpha(Q)$. This allows us to use the {\em Kempf-Lascoux-Weyman's complex} \cite{W} to study the variety $\Rep_{\gha}(Q)$.
The method in \cite{W} reaches its full strength if $\Rep_{\gha}(Q)$ has {\em rational singularities} and $q$ is birational.
Unfortunately, this nice situation rarely occurs in general. To be more precise, when $Q$ is non-Dynkin, for most dimension vectors, the variety $\Rep_{\gha}(Q)$ is not normal. The best situation one can hope is that all higher direct images $\mc{R}^iq_*\mc{O}_Z$ vanish and $q$ is birational, then the KLW-complex is the minimal free resolution of the normalization of $\Rep_{\gha}(Q)$.
However, when $Q$ is wild, for most dimension vectors, some higher direct images $\mc{R}^iq_*\mc{O}_Z$ do not vanish.
We restate the main theorems in \cite{W} in our setting. They are Theorem \ref{T:main}, \ref{T:codim}, and \ref{T:dual}.

It seems hopeless to understand the free resolution of $\Rep_\gha(Q)$ in general, but we still hope to find the defining equations of these varieties, at least set-theoretically. To be more practical, we focus on the case of $m$-arrow Kronecker quivers $K_m$.
For one thing, the sheaf cohomology involved in the KLW-complex can be explicitly computed if we introduce the {\em Kronecker coefficients}.
The Kronecker coefficient $g_{\mu,\nu}^\lambda$ is by definition the structure constant in the tensor product
$$S_\mu\otimes S_\nu = \bigoplus_{\lambda} g_{\mu,\nu}^\lambda S_\lambda,$$
where $S_\lambda$ is the irreducible representation of the symmetric group defined by the partition $\lambda$.
By Schur-Weyl duality, it also appears in
\begin{equation} \label{eq:KC} S^\lambda(V\otimes W) = \sum_{\mu,\nu} g_{\mu,\nu}^\lambda S^\mu(V)\otimes S^\nu(W), \end{equation}
where $S^\lambda$ is the Schur functor corresponding to $\lambda$.
For another thing, when $\gamma=(1,\alpha_2-1)$ our quiver determinantal variety of $K_m$ coincides with the variety constructed from certain $3$-tensor in \cite{BEKS}.
Motivated by some ideas in \cite{BEKS}, we consider the construction in \cite{W} for some line bundle on $\Gr\sm{\alpha\\ \gamma}$. In Proposition \ref{P:Fi} we compute each term of the KLW-complex for any line bundle.
If the complex has no negative degree term, it minimally resolves a module supported on $\Rep_{\gha}(Q)$. This allows us to determine the set-theoretical defining equations of $\Rep_{\gha}(Q)$.

Recall that line bundles on ordinary Grassmannians are indexed by $\mb{Z}$, so for $K_m$ line bundles on $\Gr\sm{\alpha\\ \gamma}$ are parameterized by $\mb{Z}\times\mb{Z}$.
Our main result concerns how to choose an element $\omega$ in $\mb{Z}\times \mb{Z}$ such that the corresponding KLW-complex $F_\bullet^\omega$ has no negative degree term. This is done in Lemma \ref{L:nonnegative}.
We hope to find weights such that the length of $F_\bullet^\omega$ is equal to the codimension of $\Rep_\gha(K_m)$, i.e., $F_\bullet^\omega$ resolves a {\em maximal Cohen-Macaulay} module. This can be easily done by applying the duality theorem to Lemma \ref{L:nonnegative}.
We introduce the notation $\hom_{Q}(\gamma,\beta)$ (resp. $\ext_{Q}(\gamma,\beta)$) to denote the dimension of the space of homomorphisms (resp. extensions) from a general $\gamma$-dimensional representation to a general $\beta$-dimensional representation of $Q$.

\begin{theorem} \label{T:intro1} Let $\beta=\alpha-\gamma$, and assume that $\hom_{K_m}(\gamma,\beta)=0$. If $\omega$ and its dual $\omega^\vee$ satisfy $(\beta_1-w_1)^2+(\gamma_2-w_2)^2<8$ or $(\beta_1-w_1)^2+(\gamma_2-w_2)^2=8$ with any of the following: $(1).\ \beta_1\neq \gamma_2$, $(2).\ w_1\neq w_2$, and $(3).\ w_1+w_2>m-3$,
then the complex $F_\bullet^\omega$ resolves a maximal Cohen-Macaulay module supported on $\Rep_\gha(K_m)$.
\end{theorem}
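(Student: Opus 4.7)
\medskip

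\noindent\textbf{Proof proposal.} The plan is to derive the theorem by combining Lemma \ref{L:nonnegative} applied to \emph{both} $\omega$ and its dual $\omega^\vee$, interfaced through the duality theorem (Theorem \ref{T:dual}), and fed into the main KLW theorem (Theorem \ref{T:main}). The numerical hypothesis $(\beta_1-w_1)^2+(\gamma_2-w_2)^2 \leq 8$ is exactly the shape of inequality that Lemma \ref{L:nonnegative} consumes, so the strategy is to turn this into vanishing of the two ``halves'' of $F_\bullet^\omega$ and then glue.

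First, I would apply Lemma \ref{L:nonnegative} directly to $\omega$: the assumption guarantees that $F_\bullet^\omega$ has no terms in strictly negative homological degree. By Theorem \ref{T:main}, this upgrades to the statement that $F_\bullet^\omega$ is the minimal free resolution of a finitely generated module $M^\omega$ whose support is contained in $\Rep_{\gha}(K_m)$. The hypothesis $\hom_{K_m}(\gamma,\beta)=0$ enters at this step to ensure $q\colon Z\to \Rep_{\gha}(K_m)$ is generically finite (in fact, birational, since a general representation in the image then admits a unique $\gamma$-dimensional subrepresentation), so that the support of $M^\omega$ is all of $\Rep_{\gha}(K_m)$ and not a proper subvariety.

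Second, I would apply Lemma \ref{L:nonnegative} to $\omega^\vee$. This gives that $F_\bullet^{\omega^\vee}$ is also non-negative. Theorem \ref{T:dual} identifies $F_\bullet^{\omega^\vee}$, up to a shift by $\codim \Rep_\gha(K_m)$ and dualization, with $F_\bullet^\omega$. Translating the non-negativity on the dual side back across the duality gives that $F_\bullet^\omega$ has no terms in homological degree strictly above $\codim \Rep_\gha(K_m)$. Combined with Step~1, the complex $F_\bullet^\omega$ is concentrated in the range $[0,\codim \Rep_\gha(K_m)]$, so the projective dimension of $M^\omega$ equals this codimension, whence $M^\omega$ is maximal Cohen-Macaulay by Auslander--Buchsbaum.

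The main obstacle lies in the boundary case $(\beta_1-w_1)^2+(\gamma_2-w_2)^2=8$, which is exactly where the ``generic'' form of Lemma \ref{L:nonnegative} degenerates: the extremal index in the sum computing some $F_i^\omega$ becomes critical, and one needs the three auxiliary conditions (1)--(3) to force the would-be obstructing sheaf cohomology term to vanish. Condition (1) breaks the symmetry $\beta_1=\gamma_2$ under which a Kronecker coefficient $g_{\mu,\nu}^\lambda$ contributes a stubborn summand; condition (2) rules out the companion case in the weight coordinates; and condition (3) places the critical weight outside the Borel--Weil--Bott ``window'' so the relevant cohomology is killed by a singular weight. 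Verifying each of these three sub-cases by revisiting the Bott computation in Proposition \ref{P:Fi} is the step I expect to be most delicate; once it is done, the non-negativity passes through Theorem \ref{T:dual} without further issue and the MCM conclusion follows formally as above.
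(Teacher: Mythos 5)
Your proposal is correct and follows essentially the paper's own route: Lemma \ref{L:nonnegative} applied to $\omega$ kills the negative-degree terms, the same lemma applied to $\omega^\vee$ together with Theorem \ref{T:dual} caps the length of $F_\bullet^\omega$ at $-\innerprod{\gamma,\beta}=\ext_{K_m}(\gamma,\beta)=\codim\Rep_\gha(K_m)$ (this is where $\hom_{K_m}(\gamma,\beta)=0$ enters), and the maximal Cohen--Macaulay conclusion follows. Two small inaccuracies that do not affect the argument: the fact that a complex with no negative-degree terms resolves a module supported on $\Rep_\gha(K_m)$ is the general KLW statement (\cite[Theorem 5.1.2.b, 5.1.3.a]{W}, quoted before Proposition \ref{P:Fi}), not Theorem \ref{T:main}, which assumes $\mc{R}^iq_*\mc{O}_Z=0$; and $\hom_{K_m}(\gamma,\beta)=0$ gives only generic finiteness of $q$, not birationality (the general fibre may consist of several subrepresentations --- see Proposition \ref{P:birational}), though birationality is not needed here.
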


However, the existence of such a weight is not guaranteed by the theorem. The result is sharp only in some cases. A sharp result would depend on a good understanding on the Kronecker coefficients. The other way around, we can actually deduce some interesting vanishing conditions on the Kronecker coefficients.
We denote by $P(s,q,t,w)$ the set of all partitions $\lambda$ with at most $s$ parts satisfying $\lambda_t\geqslant q+t+w$ and $\lambda_{t+1}\leqslant t+w$.

\begin{theorem} \label{T:intro2} Let $w_1,w_2$ be two non-positive integers. \begin{enumerate}
\item For $\mu\in P(\gamma_1,\beta_1,t_1,w_1), \nu\in P(\beta_2,\gamma_2,t_2,w_2)$ with $|\mu|=|\nu|>\beta_1 t_1+\gamma_2 t_2+\ext_{K_m}(\gamma,\beta)$,
we have that $g_{\mu,\nu}^\lambda$ vanishes if $\lambda_1\leqslant m.$
\item For $\mu\in P(\gamma_1,\beta_1,t_1,m\beta_2-\alpha_1-w_1), \nu\in P(\beta_2,\gamma_2,t_2,m\gamma_1-\alpha_2-w_2)$ with $|\mu|=|\nu|<\beta_1 t_1+\gamma_2 t_2-\hom_{K_m}(\gamma,\beta)$,
we have that $g_{\mu,\nu}^\lambda$ vanishes if $\lambda_1\leqslant m.$
\end{enumerate}
\end{theorem}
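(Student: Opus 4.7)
Both parts of the theorem follow from Proposition~\ref{P:Fi}, which expresses each term of the KLW complex $F_\bullet^\omega$ for $\Rep_\gha(K_m)$ as a direct sum of irreducible $\GL_{\alpha_1}\times\GL_{\alpha_2}$-modules with multiplicities given by Kronecker coefficients, combined with the two one-sided vanishing statements for the complex furnished by Lemma~\ref{L:nonnegative} and the duality of Theorem~\ref{T:dual}. Concretely, one unwinds Proposition~\ref{P:Fi} by applying the Cauchy expansion to $\wedge^\bullet(R_1\otimes Q_2^*\otimes k^m)$ and the Schur--Weyl decomposition~\eqref{eq:KC}: this produces summands of $F_i^\omega$ indexed by partition triples $(\mu,\nu,\lambda)$ with $|\mu|=|\nu|=|\lambda|$, each carrying the multiplicity $g_{\mu,\nu}^\lambda$, and the constraint $\lambda_1\leq m$ is forced by the Schur functor applied to the multiplicity space $k^m$. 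Bott's theorem factor by factor on $\Gr\sm{\alpha\\ \gamma}$ then contributes the irreducible $S^{\bar\mu}V_1\otimes S^{\bar\nu}V_2$ in cohomological bidegree $(\beta_1 t_1,\gamma_2 t_2)$. A direct combinatorial check identifies $P(\gamma_1,\beta_1,t_1,w_1)$ and $P(\beta_2,\gamma_2,t_2,w_2)$ as precisely the sets of partitions $\mu$ and $\nu$ for which Bott's algorithm (with the twist by $\omega=(w_1,w_2)$) yields a non-singular weight landing in the prescribed degree, and it identifies the overall homological position of this summand as $i=|\mu|-\beta_1 t_1-\gamma_2 t_2$.

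Given this expansion, part~(1) follows by observing that for $w_1,w_2\leq 0$ the dual weight $\omega^\vee$ meets the hypothesis of Lemma~\ref{L:nonnegative}, so $F_j^{\omega^\vee}=0$ for $j<0$; Theorem~\ref{T:dual} transports this to $F_i^\omega=0$ for $i>\ext_{K_m}(\gamma,\beta)$. The assumption $|\mu|>\beta_1 t_1+\gamma_2 t_2+\ext_{K_m}(\gamma,\beta)$ puts the $(\mu,\nu,\lambda)$-summand in homological position $i>\ext_{K_m}(\gamma,\beta)$, inside the vanishing range, so linear independence of the isotypic components $S^{\bar\mu}V_1\otimes S^{\bar\nu}V_2$ forces $g_{\mu,\nu}^\lambda=0$. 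Part~(2) is the dual statement: Lemma~\ref{L:nonnegative} applied directly to $\omega$ gives $F_i^\omega=0$ for $i<-\hom_{K_m}(\gamma,\beta)$, and the shifts $m\beta_2-\alpha_1-w_1$ and $m\gamma_1-\alpha_2-w_2$ appearing in the $P$-sets of part~(2) are exactly the Bott shifts one reads off for the dual line bundle under Theorem~\ref{T:dual} (equivalently, the shifts picked up from the rank of $\xi=R_1^*\otimes Q_2\otimes k^m$ and the canonical class of $\Gr\sm{\alpha\\ \gamma}$). The hypothesis $|\mu|<\beta_1 t_1+\gamma_2 t_2-\hom_{K_m}(\gamma,\beta)$ then places the summand at $i<-\hom_{K_m}(\gamma,\beta)$, and the same linear-independence argument applies.

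Modulo Proposition~\ref{P:Fi} and the two-sided vanishing of $F_\bullet^\omega$ outside the interval $[-\hom_{K_m}(\gamma,\beta),\ext_{K_m}(\gamma,\beta)]$ furnished by Lemma~\ref{L:nonnegative} together with Theorem~\ref{T:dual}, the proof reduces to matching inequalities with homological positions. The main bookkeeping tasks are the identification of the sets $P(s,q,t,w)$ with Bott-regular weights landing in the prescribed degree and the careful tracking of shifts under duality in part~(2); both are settled by explicit applications of Bott's algorithm on Grassmannians. The passage from complex-position vanishing to the vanishing of individual Kronecker coefficients is then immediate from the linear independence of $\GL_{\alpha_1}\times\GL_{\alpha_2}$-irreducibles, so no further input is needed beyond these structural ingredients.
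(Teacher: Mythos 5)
Your reduction of the theorem to Proposition~\ref{P:Fi} plus a two-sided vanishing of $F_\bullet^\omega$ is the right frame, and your unwinding of Proposition~\ref{P:Fi} (Cauchy formula, \eqref{eq:KC}, Bott's algorithm, and the observation that $S^{\lambda'}R_{12}\neq 0$ forces $\lambda_1\leqslant m$) is correct. The gap is in where you get the vanishing. You attribute it to Lemma~\ref{L:nonnegative}, applied to $\omega$ in one part and to $\omega^\vee$ in the other, claiming that non-positivity of $w_1,w_2$ suffices for its hypothesis. It does not: the lemma requires the stringent numerical condition $(\beta_1-w_1)^2+(\gamma_2-w_2)^2\leqslant 8$, which already fails for $w_1=w_2=0$ as soon as $\beta_1\geqslant 3$ or $\gamma_2\geqslant 3$ (and Example~\ref{ex:codim1} shows the conclusion genuinely fails there: the untwisted complexes for $K_5$, $\alpha=(4,5)$, $\gamma=(1,3)$ have $F_{-1},F_{-2}\neq 0$). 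Likewise the dual of a non-positive line weight need not satisfy the hypothesis (e.g.\ $K_3$, $\alpha=(5,3)$, $\gamma=(3,2)$, $\omega=(0;0)$ gives $\omega^\vee=(-2;6)$ and $(\beta_1-w_1)^2+(\gamma_2-w_2)^2=32$). So your argument only covers the small family of weights to which Lemma~\ref{L:nonnegative} actually applies, not all non-positive ones.

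The input the paper uses instead is geometric, not combinatorial. For $\omega=0$, part (1) is Proposition~\ref{P:KRc}, which rests on Theorem~\ref{T:codim}.(1): the \emph{unconditional} equality $\max\{i\mid F_i\neq 0\}=\codim\Rep_\gha(Q)=\ext_Q(\gamma,\beta)$, whose proof (Weyman, Theorem 5.1.6) uses Grauert--Riemenschneider vanishing for the canonical sheaf of $Z$. The role of the hypothesis $w_1,w_2\leqslant 0$ is that the corresponding line bundle $\mc{V}$ on $\Gr\sm{\alpha\\ \gamma}$ is then ample (or trivial), and Grauert--Riemenschneider persists for the canonical sheaf tensored with an ample line bundle, so the upper bound $\max\{i\mid F_i(\mc{V})\neq 0\}\leqslant\ext_{K_m}(\gamma,\beta)$ holds for the twisted complex as well; this gives part (1). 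Part (2) is then the image of part (1) under the duality of Theorem~\ref{T:dual}, which is exactly where the shifts $m\beta_2-\alpha_1-w_1$ and $m\gamma_1-\alpha_2-w_2$ come from (a point you did identify correctly). If you replace your appeals to Lemma~\ref{L:nonnegative} by this vanishing, the remaining bookkeeping in your proposal goes through.
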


When the variety $\Rep_\gha(Q)$ has codimension one in $\Rep_\alpha(Q)$, the single irreducible defining polynomial $\Delta_{\alpha,m}^\gamma$ is a relative tensor invariant.
It can be computed by the determinant of the complex (Proposition \ref{P:det}). When the complex has length two, we get a determinantal formula for $\Delta_{\alpha,m}^\gamma$.
We find all such polynomials for $2\leqslant m, \alpha_1,\alpha_2\leqslant 5$ (Example \ref{ex:rigid}, \ref{ex:codim1}). It is quite surprising that we can always find a weight such that the differential is linear, i.e., of degree one.

Finally, we make one possible generalization from the quiver setting to the tensor setting in the last section.
We consider an analogous quotient bundle $\mc{E}$ such that the corresponding subbundle desingularizes some variety $R_\gca$.
Proposition \ref{P:TFi}, Corollary \ref{C:Kron}, and Proposition \ref{P:TMCM} are analogues of Proposition \ref{P:Fi}, Theorem \ref{T:intro2}, and Theorem \ref{T:intro1}. When $R_\gca$ has codimension one, it also corresponds to a relative tensor invariant. We also find all such invariants for $2\leqslant \alpha_i\leqslant 5$.

\section{Review of Vector Bundles on Grassmannians} \label{S:Bott}
Let $\Gr\sm{r\\s}$ be the Grassmannian variety parameterizing $s$-dimensional subspace in $R=k^r$. Let $\mc{S}$ and $\mc{Q}$ be the universal sub- and quotient bundles on $\Gr\sm{r\\s}$
$$0\to \mc{S}\to \Gr\sm{r\\s}\times R\to \mc{Q} \to 0.$$

Given a permutation $\sigma$, we define the {\em length} of $\sigma$ to be $\ell(\sigma)=\#\{i<j\mid\sigma(i)>\sigma(j)\}$. Also, define $\rho=(r-1,r-2,\dots,1,0)$. Given a sequence of integers $\alpha$, we define $\sigma\circ\alpha = \sigma(\alpha+\rho)-\rho$.
\begin{theorem}[Borel-Weil-Bott] Let $\mu,\nu$ be two partitions, and set $\lambda=(\mu,\nu).$ Then exactly one of the following two situations occur.
\begin{enumerate} \item There exists $\sigma\neq \op{id}$ such that $\sigma \circ \lambda = \lambda.$ Then all cohomologies of $S^\mu \mc{Q} \otimes S^\nu \mc{S}$ vanish.
\item There is a (unique) $\sigma$ such that $\eta=\sigma\circ\lambda$ is a weakly decreasing sequence. Then
$$H^{\ell(\sigma)}(\Gr\sm{r\\ s}; S^\mu \mc{Q} \otimes S^\nu \mc{S})=S^\eta R$$
and all other cohomologies vanish.
\end{enumerate}
\end{theorem}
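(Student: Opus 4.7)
The plan is to realize $\Gr\sm{r\\ s}$ as the homogeneous space $\GL_r/P$, where $P$ is the maximal parabolic stabilizing a fixed $s$-dimensional subspace of $R$, and to identify $S^\mu\mc{Q}\otimes S^\nu\mc{S}$ with the $\GL_r$-equivariant vector bundle induced from the irreducible representation $S^\mu k^{r-s}\otimes S^\nu k^s$ of the Levi quotient $\GL_{r-s}\times \GL_s$, extended trivially across the unipotent radical of $P$. Under this identification, the sheaf cohomology computes the derived functors of holomorphic induction from $P$ to $\GL_r$, so the whole problem is shifted into Lie-theoretic language.

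Next I would invoke Bott's theorem on $G/P$ in its standard form. Viewing $\lambda=(\mu_1,\dots,\mu_{r-s},\nu_1,\dots,\nu_s)$ as a weight of the diagonal torus of $\GL_r$ and letting $S_r$ act by permutations, consider the $\rho$-shifted sequence $\lambda+\rho$. If $\lambda+\rho$ has two coincident entries, its stabilizer in $S_r$ is non-trivial; translating through the identity $\sigma\circ\lambda=\sigma(\lambda+\rho)-\rho$, this is exactly condition (1), and total vanishing of cohomology follows from the standard cancellation argument (either by a one-step $\SL_2$-reduction to a root hyperplane, or by computing the equivariant Euler characteristic via the Weyl character formula). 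Otherwise there exists a unique $\sigma\in S_r$ making $\sigma(\lambda+\rho)$ strictly decreasing, equivalently making $\eta=\sigma\circ\lambda$ weakly decreasing; Bott then yields $H^i=0$ for $i\neq\ell(\sigma)$ and $H^{\ell(\sigma)}=S^\eta R$ as a $\GL_r$-module.

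A self-contained alternative, if one prefers not to cite the general $G/P$ statement, is to pull back along the projection $\pi\colon \Flag(R)\to \Gr\sm{r\\ s}$. The pullback $\pi^*(S^\mu\mc{Q}\otimes S^\nu\mc{S})$ admits a filtration whose graded pieces are line bundles, and on $\Flag(R)$ the line-bundle form of Borel-Weil-Bott is essentially a Koszul/Weyl-character calculation. One then descends via the Leray spectral sequence, using that the fibres of $\pi$ are products of two smaller Grassmannians so that higher direct images vanish on the relevant pieces.

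The main obstacle throughout is bookkeeping: one must match the dot action $\sigma\cdot\mu=\sigma(\mu+\rho)-\rho$ on torus weights of $\GL_r$ with the $\circ$-operation on the concatenated partition tuple $(\mu,\nu)$, and check that a non-trivial $\circ$-stabilizer corresponds exactly to repeated entries in $\lambda+\rho$. Once this dictionary is set up carefully, cases (1) and (2) are dispatched simultaneously by the classical theorem.
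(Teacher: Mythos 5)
The paper offers no proof of this statement: it is quoted as the classical Borel--Weil--Bott theorem (in the Grassmannian form of \cite[Ch.~4]{W}) and used as a black box, so there is no internal argument to compare yours against. Your main route --- realizing $\Gr\sm{r\\ s}$ as $\GL_r/P$, identifying $S^\mu\mc{Q}\otimes S^\nu\mc{S}$ with the bundle induced from the irreducible Levi representation $S^\mu k^{r-s}\otimes S^\nu k^{s}$, and matching the dot action on $\lambda+\rho$ with the $\circ$-operation on the concatenated tuple $(\mu,\nu)$ --- is the standard derivation and is sound; the dictionary you flag as the ``main obstacle'' is indeed the only real content, and it checks out with the paper's conventions ($\mu$ has $r-s$ parts for $\mc{Q}$, $\nu$ has $s$ parts for $\mc{S}$).

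Two caveats. First, in case (1) the equivariant Euler characteristic computation alone does not suffice: it shows $\sum_i(-1)^i\dim H^i=0$ (as a virtual character), not that each $H^i$ vanishes; you need the $\SL_2$-reduction (Demazure's induction on a simple reflection fixing $\lambda+\rho$), so drop the ``or.'' Second, your self-contained alternative runs the descent in the wrong direction. The pullback $\pi^*(S^\mu\mc{Q}\otimes S^\nu\mc{S})$ to $\Flag(R)$ is filtered by the line bundles $\mc{L}(\chi)$ for \emph{all} weights $\chi$ of $S^\mu k^{r-s}\otimes S^\nu k^{s}$, not just the highest one, so the associated spectral sequence does not collapse to the stated answer without further work. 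The clean argument is the pushforward: the fibres of $\pi$ are products of two smaller \emph{flag varieties} (not Grassmannians), the restriction of $\mc{L}(\lambda)$ to each fibre is the dominant line bundle, hence $\pi_*\mc{L}(\lambda)=S^\mu\mc{Q}\otimes S^\nu\mc{S}$ and $\mc{R}^{>0}\pi_*\mc{L}(\lambda)=0$ by fibrewise Borel--Weil; Leray then identifies $H^\bullet(\Gr\sm{r\\ s};S^\mu\mc{Q}\otimes S^\nu\mc{S})$ with $H^\bullet(\Flag(R);\mc{L}(\lambda))$, and the line-bundle case of Bott on the full flag variety finishes the proof.
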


One important case to us is the vector bundle $S^\mu \mc{S}\otimes \det^w \mc{Q}$ or $S^\nu \mc{Q}^*\otimes \det^w \mc{S}^*$.
To apply Bott's algorithm, we consider $(w^q,\mu)+\rho=(r-1+w,\dots,r-q+w,\mu_1+r-q-1,\dots,\mu_s),$ where $q=\rank \mc{Q} = r-s$.
To produce nontrivial cohomology, $(w^q,\mu)+\rho$ cannot have any repetition.
Let $t$ be the biggest number such that $\mu_t+r-q-t>r-1+w$, then $\mu_{t+1}+r-q-t-1<r-q+w$.
In terms of $\mu$, this means that
\begin{equation} \label{eq:P} \mu_t\geqslant q+t+w,\quad \mu_{t+1}\leqslant t+w.
\end{equation}
We introduce the notation $P(s,q,t,w)$ to denote all partitions with at most $s$ parts satisfying \eqref{eq:P}.
Let $\sigma(t)$ be the permutation that moves $\mu_1+r-q-1,\dots,\mu_t+r-q-t$ in front of $r-1+w,\dots,r-q+w$, then clearly $\ell(\sigma(t))=qt$ and
$$\sigma(t)\circ(w^q,\mu)=(\mu_1-q,\dots,\mu_t-q,(t+w)^q,\mu_{t+1},\dots,\mu_s).$$

So we computed the first part of the following corollary (see also \cite[p.162]{W}), and the second half is similar.
\begin{corollary} \label{C:Bott} $H^\bullet(\Gr\sm{r\\ s}; S^\mu \mc{S}\otimes \det^w \mc{Q})$ is zero unless $\mu\in P(s,q,t,w)$. In that case,
all cohomology groups vanish except that
$$H^{qt}(\Gr\sm{r\\ s}; S^\mu \mc{S}\otimes {\det}^w \mc{Q})=S^{\sigma(t)\circ(w^q,\mu)}R.$$
Similarly $H^{\bullet}(\Gr\sm{r\\ s}; S^\nu \mc{Q}^*\otimes {\det}^w \mc{S}^*)$ is zero unless $\mu\in P(q,s,t',w)$. In that case,
all cohomology groups vanish except that
$$H^{st'}(\Gr\sm{r\\ s}; S^\nu \mc{Q}^*\otimes {\det}^w \mc{S}^*)=S^{\sigma(t')\circ(w^s,\nu)}R^*.$$
\end{corollary}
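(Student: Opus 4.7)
The plan is to reduce both parts of the corollary to direct applications of the Borel--Weil--Bott theorem. Part (1) essentially fills in the hint already given in the paragraph preceding the corollary, and part (2) is obtained from part (1) by a self-duality of the Grassmannian.

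For part (1), I would apply Bott's algorithm to the weight sequence $\lambda = (w^q,\mu)$, so that $S^\mu\mc{S}\otimes\det^w\mc{Q}$ matches the format $S^{\lambda_{\mc{Q}}}\mc{Q}\otimes S^{\lambda_{\mc{S}}}\mc{S}$ of the theorem. The routine steps are: add $\rho$; identify the cutoff index $t$; translate the no-repetition condition on $\lambda+\rho$ into the pair of inequalities \eqref{eq:P} that define $P(s,q,t,w)$; read off that the unique sorting permutation $\sigma(t)$ moves the first $t$ entries of the $\mu$-block past the entire $w^q$-block and therefore has length $qt$; and finally subtract $\rho$ after sorting to obtain $\sigma(t)\circ(w^q,\mu)$ as the output weight. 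The non-vanishing cohomology then sits in degree $qt$ and equals $S^{\sigma(t)\circ(w^q,\mu)}R$, as claimed.

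For part (2), I would exploit the canonical isomorphism $\phi:\Gr\sm{r\\s}\xrightarrow{\sim}\Gr\sm{r\\q}$ sending $L\mapsto L^\perp\subset R^*$. The key identifications are $\phi^*\mc{S}'\cong\mc{Q}^*$ (since $L^\perp\cong (R/L)^*$) and $\phi^*\mc{Q}'\cong\mc{S}^*$ (since $R^*/L^\perp\cong L^*$), where $\mc{S}'$ and $\mc{Q}'$ are the tautological sub- and quotient bundles on the target Grassmannian. Under these identifications, part (1) applied on $\Gr\sm{r\\q}$ with the role swap $(s,q,R)\leftrightarrow(q,s,R^*)$ yields part (2) directly: the non-vanishing condition becomes $\nu\in P(q,s,t',w)$, and the unique non-zero cohomology is $S^{\sigma(t')\circ(w^s,\nu)}R^*$ concentrated in degree $st'$. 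The only subtle point is confirming that $\phi$ exchanges the relevant bundles cleanly, without introducing a spurious $\det$-twist that would shift the weight sequence; once that is verified, the rest is bookkeeping and I do not anticipate any genuine obstacle.
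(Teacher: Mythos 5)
Your proposal is correct and matches the paper's own argument: the paper's proof of the first half is exactly the Bott-algorithm computation in the paragraph preceding the corollary (adding $\rho$ to $(w^q,\mu)$, extracting the inequalities \eqref{eq:P}, and noting $\ell(\sigma(t))=qt$), and it dismisses the second half as ``similar.'' Your duality argument via $L\mapsto L^\perp$, with $\phi^*\mc{S}'\cong\mc{Q}^*$ and $\phi^*\mc{Q}'\cong\mc{S}^*$ (canonically, with no determinant twist), is a clean and valid way to carry out that second half.
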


\section{Some Quiver Determinantal Varieties} \label{S:Qdet}
Fix a finite quiver $Q=(Q_0,Q_1)$ and two dimension vectors $\alpha$ and $\gamma$. In what follows, we always assume $\beta=\alpha-\gamma$. We define $\Gr\sm{\alpha\\ \gamma}=\prod_{v\in Q_0} \Gr \sm{\alpha(v)\\ \gamma(v)}$. Given any vector bundle $\mc{V}$ on $\Gr \sm{\alpha(v)\\ \gamma(v)}$, we can pull it back to $\Gr\sm{\alpha\\ \gamma}$ via the projection $\pi_v$. To simplify our notation, we will write $\mc{V}_1\otimes \mc{V}_2$ instead of $\mc{V}_1\boxtimes \mc{V}_2:=\pi^*(\mc{V}_1)\otimes \pi^*(\mc{V}_2)$
if no potential confusion can arise.

The space of all $\alpha$-dimensional representations of $Q$ is
$$\Rep_\alpha(Q):=\bigoplus_{a\in Q_1}\Hom(k^{\alpha(ta)},k^{\alpha(ha)}),$$
where $ta$ and $ha$ are the tail and head of $a$.
Let $\mc{S}_v$ and $\mc{Q}_v$ be the universal sub- and quotient bundles on $\Gr\sm{\alpha(v)\\ \gamma(v)}$.
We denote the vector space $k^{\alpha(v)}$ by $R_v$, and the corresponding trivial bundle by $\mc{R}_v$.
Let $\mc{E}$ be the vector bundle on $\Gr\sm{\alpha\\ \gamma}$ defined by
$$\mc{E}:=\bigoplus_{a\in Q_1} \mc{H}om(\mc{S}_{ta},\mc{Q}_{ha}).$$
Consider the vector bundle epimorphism $\Gr\sm{\alpha\\ \gamma}\times \Rep_\alpha(Q) \to \mc{E}$
induced by tensoring $\mc{R}_{ta}^* \twoheadrightarrow \mc{S}_{ta}^*$ and $\mc{R}_{ha} \twoheadrightarrow \mc{Q}_{ha}$.
Fibrewise it sends a representation $M$ over $S\in \Gr\sm{\alpha\\ \gamma}$ to $\bigoplus_{a\in Q_1}\Hom(S_{ta},M_{ha}/S_{ha})$ by restriction and projection.
The kernel is a vector bundle denoted by $\mc{Z}$:
\begin{equation}\label{eq:exactZ} 0\to \mc{Z} \to \Gr\sm{\alpha\\ \gamma}\times \Rep_\alpha(Q) \to \mc{E}\to 0.\end{equation}
It is clear that the total space of $\mc{Z}$ is the following variety
$$Z=\{(L,M)\in \Gr\sm{\alpha\\ \gamma}\times \Rep_\alpha(Q) \mid L \text{ is a subrepresentation of }  M\}.$$
Consider the projections to the first and the second factors.
$$\xymatrix{ & Z \ar[dl]_{p} \ar[dr]^{q} & \\
\Gr\sm{\alpha\\ \gamma} && \Rep_\alpha(Q)}$$

We proved that (see \cite[Section 3]{S})
\begin{lemma}
The map $p: Z\to \Gr\sm{\alpha\\ \gamma}$ is the vector bundle with fibre
$$\bigoplus_{a\in Q_1} \Hom(k^{\gamma(ta)},k^{\gamma(ha)})\oplus \Hom(k^{\beta(ta)},k^{\alpha(ha)}).$$
In particular, $Z$ is smooth irreducible of dimension equal to $\dim\Rep_\alpha(Q)+\innerprod{\gamma,\beta}$.
\end{lemma}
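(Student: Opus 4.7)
The essential input is already the exact sequence \eqref{eq:exactZ}, so the plan is to use it and then unwind what the kernel looks like fibrewise.

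\textbf{Step 1 (Fibre of $p$).} Fix $L\in \Gr\sm{\alpha\\ \gamma}$ and choose a vector space complement $L_v'$ of $L_v$ in $R_v$ for each vertex $v$, so that $\dim L_v'=\beta(v)$ and $R_v=L_v\oplus L_v'$. The fibre $p^{-1}(L)$ consists, by definition, of tuples $(M_a)_{a\in Q_1}$ with $M_a:R_{ta}\to R_{ha}$ such that $M_a(L_{ta})\subseteq L_{ha}$. In block form with respect to the splitting, this forces the $L_{ta}\to L_{ha}'$ component to vanish, so $M_a$ is a triple
$$\bigl(M_a^{\gamma\gamma}:L_{ta}\to L_{ha},\; M_a^{\beta\gamma}:L_{ta}'\to L_{ha},\; M_a^{\beta\beta}:L_{ta}'\to L_{ha}'\bigr).$$
Regrouping the last two blocks as a single map $L_{ta}'\to L_{ha}\oplus L_{ha}'=R_{ha}$ gives exactly the stated identification of the fibre with $\bigoplus_{a}\Hom(k^{\gamma(ta)},k^{\gamma(ha)})\oplus \Hom(k^{\beta(ta)},k^{\alpha(ha)})$.

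\textbf{Step 2 (Vector bundle structure).} Rather than checking local triviality by hand, I would invoke \eqref{eq:exactZ}: the map $\Gr\sm{\alpha\\ \gamma}\times \Rep_\alpha(Q)\twoheadrightarrow \mc{E}$ is a surjection of vector bundles (its fibre at $L$ is the restriction--projection map $M_a\mapsto \bigl(S_{ta}\hookrightarrow R_{ta}\xrightarrow{M_a} R_{ha}\twoheadrightarrow R_{ha}/S_{ha}\bigr)$, which is visibly surjective). Hence the kernel $\mc{Z}$ is a vector bundle of rank
$$\op{rk}\mc{Z}=\dim\Rep_\alpha(Q)-\op{rk}\mc{E}=\sum_{a\in Q_1}\alpha(ta)\alpha(ha)-\gamma(ta)\beta(ha).$$
Since $p$ identifies with the projection of this vector bundle to its base, the fibre computation of Step 1 is consistent with this rank (one rewrites $\gamma(ta)\gamma(ha)+\beta(ta)\alpha(ha)=\alpha(ta)\alpha(ha)-\gamma(ta)\beta(ha)$).

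\textbf{Step 3 (Smoothness, irreducibility, dimension).} A vector bundle over a smooth irreducible base is smooth and irreducible, and $\Gr\sm{\alpha\\ \gamma}=\prod_v \Gr\sm{\alpha(v)\\ \gamma(v)}$ is smooth and irreducible. Adding dimensions,
$$\dim Z=\dim\Gr\sm{\alpha\\ \gamma}+\op{rk}\mc{Z}=\sum_v \gamma(v)\beta(v)+\dim\Rep_\alpha(Q)-\sum_{a\in Q_1}\gamma(ta)\beta(ha),$$
and recognising the non-$\Rep$ part as the Euler form $\innerprod{\gamma,\beta}$ gives the claimed total dimension $\dim\Rep_\alpha(Q)+\innerprod{\gamma,\beta}$.

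There is no real obstacle here: every step is bookkeeping from \eqref{eq:exactZ}. The only mildly delicate point is verifying that the map of bundles in \eqref{eq:exactZ} is indeed surjective at every point (Step 2), which is straightforward because both the restriction map $R_{ta}^*\twoheadrightarrow S_{ta}^*$ and the projection $R_{ha}\twoheadrightarrow Q_{ha}$ used to construct it are surjective, so their tensor product is too.
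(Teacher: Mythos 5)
Your proof is correct and is essentially the standard argument the paper is alluding to when it cites Schofield: the fibrewise block decomposition identifies $p^{-1}(L)$, the surjectivity of the bundle map in \eqref{eq:exactZ} makes $\mc{Z}$ a vector bundle, and the dimension count recovers $\dim\Rep_\alpha(Q)+\innerprod{\gamma,\beta}$. The rank consistency check $\gamma(ta)\gamma(ha)+\beta(ta)\alpha(ha)=\alpha(ta)\alpha(ha)-\gamma(ta)\beta(ha)$ and the identification of the non-$\Rep$ terms with the Euler form are both right, so nothing is missing.
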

Here, $\innerprod{-,-}:\mb{Z}^{Q_0}\times \mb{Z}^{Q_0}\to \mb{Z}$ is the Euler form of $Q$. By definition it is given by $\innerprod{-,-}:=\innerprod{-,-}_0-\innerprod{-,-}_1$, where $\innerprod{-,-}_0$ is the usual dot product and $\innerprod{\gamma,\beta}_1=\sum_{a\in Q_1} \gamma(ta)\beta(ha)$.
It is well-known that $\innerprod{\gamma,\beta}=\hom_Q(\gamma,\beta)-\ext_Q(\gamma,\beta)$.
Schofield discovered a recursive algorithm to compute $\ext_Q(\gamma,\beta)$ in \cite{S}.

\begin{definition} We define certain quiver analogue of determinantal varieties
$$\Rep_{\gha}(Q):=\{M\in\Rep_\alpha(Q)\mid M \text{ has a $\gamma$-dimensional subrepresentation} \}.$$
Since $Z$ is integral and $q$ is projective, the scheme-theoretical image $q(Z)$ is integral and closed, and hence equal to $\Rep_\gha(Q)$.
We always assume that $\Rep_{\gha}(Q)$ is strictly contained in $\Rep_\alpha(Q)$.
Note that when $Q$ is the $A_2$-quiver, such a variety is a usual determinantal variety.
\end{definition}

From now on, we will use $q$ to denote the map $q:Z\to \Rep_{\gha}(Q)$.
\begin{lemma}\cite{S}  The dimension of a general fibre of $q$ is equal to $\hom_Q(\gamma,\beta)$.
The codimension of $\Rep_{\gha}(Q)$ in $\Rep_\alpha(Q)$ is equal to $\ext_Q(\gamma,\beta)$.
\end{lemma}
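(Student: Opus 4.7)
The plan is to reduce both statements to a single generic fibre-dimension calculation for $q$. From the previous lemma, $Z$ is irreducible of dimension $\dim\Rep_\alpha(Q)+\innerprod{\gamma,\beta}$; since the first projection to the projective variety $\Gr\sm{\alpha\\ \gamma}$ is proper, $q$ is proper, and $\Rep_\gha(Q)=q(Z)$ is irreducible and closed. Denoting by $d$ the dimension of a general fibre of $q$, we have
$$\codim \Rep_\gha(Q)=\dim\Rep_\alpha(Q)-(\dim Z-d)=d-\innerprod{\gamma,\beta},$$
so in view of $\ext_Q(\gamma,\beta)=\hom_Q(\gamma,\beta)-\innerprod{\gamma,\beta}$ the whole lemma reduces to proving $d=\hom_Q(\gamma,\beta)$.

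Next I would identify the tangent space to a fibre. The scheme-theoretic fibre $q^{-1}(M)$ is the quiver Grassmannian $\Gr_\gamma(M)$ of $\gamma$-dimensional subrepresentations of $M$; by a standard first-order deformation argument, at a point $L\in\Gr_\gamma(M)$ its tangent space is
$$T_L\Gr_\gamma(M)=\Hom_Q(L,M/L),$$
since a tangent vector is a $k$-linear map $\phi\colon L\to M/L$ (the ordinary Grassmannian tangent space) and the subrepresentation condition forces $\phi$ to commute with every arrow. To pin down this dimension at a generic point, I would locally trivialize the universal sub- and quotient bundles on $\Gr\sm{\alpha\\ \gamma}$: the fibre of $p$ then splits, and the assignment $(L,M)\mapsto(L,M/L)$ becomes a linear surjection onto $\Rep_\gamma(Q)\oplus\Rep_\beta(Q)$. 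Hence for $(L,M)$ in a dense open of $Z$ both $L$ and $M/L$ are generic representations of their dimensions, and therefore $\dim\Hom_Q(L,M/L)=\hom_Q(\gamma,\beta)$ on such a dense open.

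Finally, I would invoke generic smoothness in characteristic zero for the surjection $q\colon Z\to\Rep_\gha(Q)$: there is a dense open $V\subset\Rep_\gha(Q)$ over which $q$ is smooth, so for $M\in V$ the fibre $q^{-1}(M)$ is smooth of dimension $d$ and at every point $L$ in it one has $\dim T_L q^{-1}(M)=d$. Picking a point $(L,M)$ lying simultaneously in $q^{-1}(V)$ and in the dense open subset of $Z$ where the tangent-space dimension equals $\hom_Q(\gamma,\beta)$ forces $d=\hom_Q(\gamma,\beta)$, completing the proof. The main obstacle is precisely this simultaneous genericity step: the two open subsets---one provided by generic smoothness of $q$, the other by upper semicontinuity of $\dim\Hom_Q(-,-)$---must be shown to intersect inside $Z$, which is why passing through the local trivialization of $\mc{S}$ and $\mc{Q}$ (rather than attempting to compare the moduli stacks of $\gamma$- and $\beta$-dimensional representations directly) is the cleanest route.
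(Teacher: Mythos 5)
Your argument is correct, and it is essentially the proof in the cited source: the paper itself only quotes this lemma from Schofield's \emph{General representations of quivers} \cite{S}, where the fibre of $q$ is identified with the quiver Grassmannian, its tangent space with $\Hom_Q(L,M/L)$, and the conclusion is drawn from generic smoothness in characteristic zero together with the fact that the fibre of $p$ surjects onto $\Rep_\gamma(Q)\oplus\Rep_\beta(Q)$, so that a general point of $Z$ has both $L$ and $M/L$ general. The ``simultaneous genericity'' step you flag is immediate since $Z$ is irreducible, so the two nonempty open sets necessarily meet.
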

So we always assume that $\ext_Q(\gamma,\beta)>0$. Moreover, $\hom_Q(\gamma,\beta)=0$ is a necessary condition for $q$ being birational. Note that the combination of $\hom_Q(\gamma,\beta)=0$ and $\ext_Q(\gamma,\beta)>0$ is equivalent to the condition
\begin{equation} \label{eq:homext} \hom_Q(\gamma,\beta)=0,\quad\text{and}\quad \innerprod{\gamma,\beta}<0.\end{equation}

It is well-known that in characteristic 0, rational bijective implies birational. We are going to give a numerical criterion for $q$ to be a birational isomorphism.
By Bertini's theorem the general fibre of $q$ is reduced, so let us assume the opposite that the general fibre of $q$ contains more than one representation. Then a general representation in $\Rep_{\gha}(Q)$ have at least two $\gamma$-dimensional subrepresentations. There exists a dimension vector $\delta$ such that a general representation $M\in \Rep_{\gha}(Q)$ has subrepresentations $L_1, L_2$ of $M$ such that $\dim(L_1\cap L_2)=\delta$.
Consider the incidence varieties
\begin{align*}
&\op{Inc}\sm{\alpha \\ \gamma\cap \gamma=\delta}=\{(V_1,V_2)\in\Gr \sm{\alpha\\ \gamma} \times \Gr \sm{\alpha\\ \gamma}\mid \dim(V_1\cap V_2)=\delta \},\\
& Z_\delta =\{(M,L_1,L_2)\in \op{Inc}\sm{\alpha \\ \gamma\cap \gamma=\delta}\times \Rep_\alpha(Q) \mid L_1, L_2 \text{ are subrepresentations of } M\}.
\end{align*}
The first one is a smooth irreducible (non-closed) subvariety of $\Gr \sm{\alpha\\ \gamma} \times \Gr \sm{\alpha\\ \gamma}$ of codimension equal to $\innerprod{\delta,\beta-\gamma+\delta}_0$.
The following lemma is straightforward.

\begin{lemma} \label{L:Zdelta} $Z_\delta$ is a vector bundle over $\op{Inc}\sm{\alpha \\ \gamma\cap \gamma=\delta}$ with fibre
$$\Hom(k^\delta,k^\delta)\oplus2\Hom(k^{\gamma-\delta},k^{\gamma})\oplus\Hom(k^{\beta-\gamma+\delta},k^\alpha).$$
In particular, $Z_\delta$ is smooth and irreducible with dimension equal to $$\dim\Rep_\alpha(Q)+2\innerprod{\gamma,\beta}-\innerprod{\delta,\beta-\gamma+\delta}.$$
\end{lemma}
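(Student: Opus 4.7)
The plan is to imitate the vector-bundle construction used for $Z \to \Gr\sm{\alpha\\ \gamma}$, but over the incidence variety. On $\op{Inc}\sm{\alpha \\ \gamma\cap \gamma=\delta}$ there are two universal rank-$\gamma$ subbundles $\mc{S}_1, \mc{S}_2$ of the trivial bundle $\mc{R} := \bigoplus_v R_v \otimes \mc{O}$; because $\dim(V_{1,v} \cap V_{2,v}) \equiv \delta_v$ on this stratum, the kernel of $\mc{S}_1 \oplus \mc{S}_2 \to \mc{R}$ has locally constant rank $\delta$, and hence the intersection $\mc{S}_1 \cap \mc{S}_2$ is itself a subbundle. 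Let $\mc{Q}_i := \mc{R}/\mc{S}_i$ denote the corresponding quotient bundles. I would then realize $Z_\delta$ as the kernel of the morphism of vector bundles
$$\psi: \Rep_\alpha(Q)\otimes \mc{O}_{\op{Inc}} \longrightarrow \bigoplus_{a \in Q_1}\bigl(\mc{H}om(\mc{S}_{1,ta}, \mc{Q}_{1,ha}) \oplus \mc{H}om(\mc{S}_{2,ta}, \mc{Q}_{2,ha})\bigr),$$
defined fibrewise by sending $M$ to the pair of residues obstructing $V_1$ and $V_2$ from being subrepresentations. The total space of $\ker\psi$ is then $Z_\delta$ by definition.

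The main step is to verify that $\psi$ has constant fibre rank, which then forces $\ker\psi$ to be a subbundle. Choose any splitting $R_v = (V_1\cap V_2)_v \oplus W_{1,v} \oplus W_{2,v} \oplus U_v$ refining the decompositions $V_{i,v} = (V_1\cap V_2)_v \oplus W_{i,v}$ and $V_{1,v}+V_{2,v} = (V_1\cap V_2)_v \oplus W_{1,v}\oplus W_{2,v}$. For each arrow $a\colon v\to w$, the conditions $M_a(V_{i,v}) \subseteq V_{i,w}$ for $i=1,2$ simultaneously force $M_a|_{(V_1\cap V_2)_v}$ to land in $V_{1,w}\cap V_{2,w} = (V_1\cap V_2)_w$, force $M_a|_{W_{i,v}}$ to land in $V_{i,w}$, and impose no restriction on $M_a|_{U_v}$. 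Summing over arrows identifies the fibre with
$$\Hom(k^\delta,k^\delta) \oplus 2\Hom(k^{\gamma-\delta}, k^\gamma) \oplus \Hom(k^{\beta-\gamma+\delta}, k^\alpha),$$
whose dimension is a function only of $\alpha,\gamma,\delta$. Hence $\psi$ has locally constant rank and $Z_\delta \to \op{Inc}$ is the vector bundle asserted.

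The remaining claims are bookkeeping. Smoothness and irreducibility of $Z_\delta$ follow from those of $\op{Inc}$, which were noted just before the lemma, since any vector bundle over a smooth irreducible base inherits both properties. For the dimension, add the fibre dimension to $\dim\op{Inc} = 2\innerprod{\gamma,\beta}_0 - \innerprod{\delta,\beta-\gamma+\delta}_0$ and regroup using $\innerprod{-,-} = \innerprod{-,-}_0 - \innerprod{-,-}_1$ together with $\alpha = \beta+\gamma$; the expression collapses to $\dim\Rep_\alpha(Q) + 2\innerprod{\gamma,\beta} - \innerprod{\delta,\beta-\gamma+\delta}$. The only delicate point I foresee is the very first one: $\mc{S}_1\cap \mc{S}_2$ is a subbundle only because we work on the locally closed stratum where its rank is constantly $\delta$; over the closure it would merely be a coherent sheaf, and the vector-bundle argument would break down.
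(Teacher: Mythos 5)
Your proof is correct, and it is essentially the intended argument: the paper itself offers no proof (it declares the lemma ``straightforward''), and your fibrewise decomposition $R_v=(V_1\cap V_2)_v\oplus W_{1,v}\oplus W_{2,v}\oplus U_v$ is exactly the natural extension of the earlier vector-bundle description of $Z\to\Gr\sm{\alpha\\ \gamma}$. Your closing caveat --- that constancy of $\dim(V_1\cap V_2)$ on the locally closed stratum is what makes the kernel a subbundle --- is precisely the point that makes the lemma work, and the dimension count checks out against $\dim\op{Inc}=2\innerprod{\gamma,\beta}_0-\innerprod{\delta,\beta-\gamma+\delta}_0$.
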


Now we assume that $\hom_Q(\gamma,\beta)=0$, so $\dim \Rep_{\gha}(Q)=\dim\Rep_\alpha(Q)+\innerprod{\gamma,\beta}$.
Let $q_\delta$ be the projection from $Z_\delta\to \Rep_\alpha(Q)$. By our assumption, we have that $\overline{q_\delta(Z_\delta)}=\Rep_{\gha}(Q)$. In particular, $\dim Z_\delta \geqslant \dim\Rep_\alpha(Q)+\innerprod{\gamma,\beta}$.
So by Lemma \ref{L:Zdelta}, $\innerprod{\delta,\beta-\gamma+\delta}\leqslant \innerprod{\gamma,\beta}$.

Moreover, every representation in $\Rep_{\gha}(Q)$ has a $(2\gamma-\delta)$-dimensional subrepresentation.
So $\Rep_{2\gamma-\delta\hookrightarrow \alpha}(Q)\supseteq \Rep_{\gha}(Q)$, and thus
$\ext_Q(2\gamma-\delta,\beta-\gamma+\delta)\leqslant-\innerprod{\gamma,\beta}$. Therefore
$\innerprod{2\gamma-\delta,\beta-\gamma+\delta}\geqslant\innerprod{\gamma,\beta}$. So we proved

\begin{proposition} \label{P:birational} Assume that $\hom_Q(\gamma,\beta)=0$.
If for any $\delta\precneqq \gamma$ with $2\gamma-\delta\preceq\alpha$,
$$\text{either} \quad \innerprod{2\gamma-\delta,\beta-\gamma+\delta}<\innerprod{\gamma,\beta} \quad \text{ or }\quad \innerprod{\gamma,\beta} < \innerprod{\delta,\beta-\gamma+\delta},$$
then $q$ is a birational isomorphism.
Here, $\preceq$ is the relation defined by $\delta\prec \gamma$ if and only if $\gamma-\delta\in (\mb{Z}_{\geq 0})^{Q_0}$.
\end{proposition}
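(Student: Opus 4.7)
The plan is to prove the contrapositive: if $q$ fails to be birational, then there exists some $\delta \precneqq \gamma$ with $2\gamma - \delta \preceq \alpha$ violating both strict alternatives in the hypothesis. Essentially all the ingredients needed are already assembled in the discussion immediately preceding the statement, so the work consists of arranging them in contrapositive form and verifying that the resulting weak inequalities are enough to contradict the strict-inequality hypothesis.

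First, I would invoke the characteristic-$0$ principle that a proper dominant morphism is birational as soon as its general fibre is a single reduced point. The morphism $q$ is projective and surjects onto $\Rep_{\gha}(Q)$, and Bertini guarantees that the general fibre is reduced. Hence failure of birationality forces a general $M \in \Rep_{\gha}(Q)$ to admit two distinct $\gamma$-dimensional subrepresentations $L_1 \neq L_2$. Letting $\delta$ be the dimension vector of $L_1 \cap L_2$ immediately gives $\delta \precneqq \gamma$ (strict, because $L_1 \neq L_2$ implies $L_1 \cap L_2 \subsetneq L_1$) and $2\gamma - \delta \preceq \alpha$ (because $L_1 + L_2 \subseteq M$).

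Next I would extract the two required inequalities. The first, $\innerprod{\delta,\beta-\gamma+\delta} \leq \innerprod{\gamma,\beta}$, comes from a pure dimension count: $Z_\delta$ dominates $\Rep_{\gha}(Q)$, and combining the formula in Lemma \ref{L:Zdelta} with $\dim \Rep_{\gha}(Q) = \dim \Rep_\alpha(Q) + \innerprod{\gamma,\beta}$ (which follows from $\hom_Q(\gamma,\beta) = 0$) yields it directly. The second, $\innerprod{2\gamma-\delta,\beta-\gamma+\delta} \geq \innerprod{\gamma,\beta}$, comes from the variety inclusion $\Rep_{\gha}(Q) \subseteq \Rep_{2\gamma-\delta \hookrightarrow \alpha}(Q)$ (since $M$ contains the $(2\gamma-\delta)$-dimensional subrepresentation $L_1 + L_2$), which via the codimension interpretation of $\ext_Q$ forces $\ext_Q(2\gamma-\delta,\beta-\gamma+\delta) \leq \ext_Q(\gamma,\beta) = -\innerprod{\gamma,\beta}$; together with $\hom_Q \geq 0$ and the Euler-form identity, this delivers the desired bound. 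The two inequalities taken together contradict the hypothesis that one of the two strict alternatives holds for this particular $\delta$.

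The only real obstacle is bookkeeping. I need to check that (a) the $\delta$ produced from $L_1 \neq L_2$ really is strictly smaller than $\gamma$, and (b) the non-strict inequalities extracted from dimension counting genuinely contradict the strict-inequality hypothesis of the proposition (they do, precisely because the hypothesis is phrased with strict inequality on at least one side). A minor technical worry is ensuring that $Z_\delta$ is irreducible at the particular $\delta$ arising from Bertini, so its dimension bound transfers correctly to the closure of its image; this is already supplied by Lemma \ref{L:Zdelta}.
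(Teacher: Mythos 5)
Your proposal is correct and follows essentially the same route as the paper: the paper's own argument is exactly the contrapositive you describe, producing $\delta=\dim(L_1\cap L_2)$ from a non-reduced-point general fibre, deriving $\innerprod{\delta,\beta-\gamma+\delta}\leqslant\innerprod{\gamma,\beta}$ from the dimension count on $Z_\delta$ and $\innerprod{2\gamma-\delta,\beta-\gamma+\delta}\geqslant\innerprod{\gamma,\beta}$ from the inclusion $\Rep_{\gha}(Q)\subseteq\Rep_{2\gamma-\delta\hookrightarrow\alpha}(Q)$. No substantive differences.
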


\section{Main Construction} \label{S:Main}
In this section, we are going to construct finite free resolution of $q_*(\mc{O}_Z)$.
We note that $q_*\mc{O}_Z$ is finite over $\Rep_{\gha}(Q)$ \cite[Corollary III.11.5]{Ha}.
We are in the situation of the Basic Theorem of \cite{W}
\begin{equation*}
\xymatrix@R=5ex@C=7ex{
Z\ar[dd]_{q} \ar@/^/[drr]^{p} \ar@{_{(}->}[dr]\\
 & \Gr\sm{\alpha \\ \gamma}\times \Rep_\alpha(Q) \ar[d]_{\pi} \ar[r]&  \Gr\sm{\alpha \\ \gamma} \\
\Rep_\gha(Q) \ar@{^{(}->}[r] & \Rep_\alpha(Q) }
\end{equation*}

We denote $GR:=\Gr\sm{\alpha\\ \gamma}\times \Rep_\alpha(Q)$. Consider the locally free resolution of the sheaf $\mc{O}_Z$ as an $\mc{O}_{GR}$-module given by the Koszul complex \cite[Lemma 5.1.1.a]{W}
$$\mc{K}: 0\to \bigwedge^{\innerprod{\gamma,\beta}_1} p^* \mc{E}^* \to \cdots \to \bigwedge^2 p^* \mc{E}^* \to p^*\mc{E}^*\to \mc{O}_{GR}.$$
It turns out that the derived pushforward of this complex by $\pi$ is isomorphic to a complex $F_\bullet$
whose $i$th-component is given by \cite{W}
$$F_i=\bigoplus_{j\geqslant 0} H^j(\Gr\sm{\alpha\\ \gamma}; \bigwedge^{i+j} \mc{E}^*)\otimes A(-i-j),$$
where $A=k[\Rep_\alpha(Q)]$ is the coordinate ring of $\Rep_\alpha(Q)$.
We will compute each $F_i$ for Kronecker quivers in Section \ref{S:Kron}.

We know from \cite[Theorem 5.1.2]{W} that there exist minimal differentials $d_i: F_i\to F_{i-1}$ of degree $0$ such that $F$ is a complex of free graded $A$-modules with $H_{-i}(F_\bullet)=\mc{R}^iq_*\mc{O}_Z$. In particular $F_\bullet$ is exact in positive degree. Moreover, by \cite[Theorem 5.4.1]{W} all differentials can be made $G$-equivariant, where $G:=\GL_\alpha\times \prod_{(u,v)\in Q_0^2} \GL(R_{uv})$, and $R_{uv}=kQ(u,v)$ is the vector space spanned by arrows from $u$ to $v$.
It follows that

\begin{theorem} \label{T:main} Assume that $\mc{R}^i q_*\mc{O}_Z=0$ for $i>0$. Then the $G$-equivariant complex $F_\bullet$ is a minimal free resolution of $q_*\mc{O}_Z$.

If $q$ is a birational isomorphism, $q_*\mc{O}_Z$ is the normalization of $\Rep_{\gha}(Q)$. In particular, the normalization has rational singularities, and hence is Cohen-Macaulay.
\end{theorem}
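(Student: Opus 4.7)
The plan is to apply the Basic Theorem \cite[Theorem 5.1.2]{W} together with its $G$-equivariant refinement \cite[Theorem 5.4.1]{W}, and then add a short Stein-factorization argument for the second half. First I would verify that we are in Weyman's framework: $\pi\colon GR\to \Rep_\alpha(Q)$ is a trivial vector bundle projection, the Koszul complex $\mc{K}$ displayed above is a locally free resolution of $\mc{O}_Z$ as an $\mc{O}_{GR}$-module by \cite[Lemma 5.1.1.a]{W}, and $\pi|_Z=q$, so $Rq_*\mc{O}_Z\cong R\pi_*\mc{K}$.

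Next, I would compute $R\pi_*\mc{K}$ via the hypercohomology spectral sequence. Since $\pi$ is a trivial projection and $A=k[\Rep_\alpha(Q)]$, the projection formula identifies $R^j\pi_*(\bigwedge^k p^*\mc{E}^*)=H^j(\Gr\sm{\alpha\\ \gamma};\bigwedge^k\mc{E}^*)\otimes_k A$. The Basic Theorem assembles this $E_1$-page into a minimal complex $F_\bullet$ of free graded $A$-modules whose $i$-th term is the one displayed just before the statement, with shift $A(-i-j)$ coming from the Koszul grading and $H_{-i}(F_\bullet)=\mc{R}^iq_*\mc{O}_Z$. The vanishing hypothesis $\mc{R}^iq_*\mc{O}_Z=0$ for $i>0$ therefore makes $F_\bullet$ a minimal free resolution of $q_*\mc{O}_Z$. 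Because the bundles $\mc{Z}$ and $\mc{E}$ and the whole Koszul/spectral-sequence construction are manifestly $G$-equivariant, \cite[Theorem 5.4.1]{W} allows each differential $d_i$ to be chosen $G$-equivariant.

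For the second statement, assume $q$ is birational. Apply Stein factorization
$$q\colon Z\xrightarrow{q'} Y:=\Spec_{\Rep_\gha(Q)}(q_*\mc{O}_Z)\xrightarrow{\nu}\Rep_\gha(Q),$$
where $q'$ has connected fibres and $\nu$ is finite (finiteness of $q_*\mc{O}_Z$ over $\Rep_\gha(Q)$ was already noted at the start of this section). Birationality of $q$ forces $\nu$ to be birational, and since $Z$ is smooth (hence normal) so is $Y$; therefore $\nu$ is the normalization map of $\Rep_\gha(Q)$, which means precisely that $q_*\mc{O}_Z$ is the structure sheaf of the normalization. Because $\nu$ is affine, the hypothesis $\mc{R}^iq_*\mc{O}_Z=0$ for $i>0$ translates to $\mc{R}^i(q')_*\mc{O}_Z=0$, and together with $(q')_*\mc{O}_Z=\mc{O}_Y$ this says exactly that the resolution $q'\colon Z\to Y$ witnesses rational singularities of $Y$. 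Kempf's theorem in characteristic zero then yields Cohen-Macaulayness.

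The whole argument is essentially bookkeeping on top of \cite{W}; the only step that needs an independent argument is the identification of $q_*\mc{O}_Z$ with the structure sheaf of the normalization via Stein factorization, and this is the main place where the smoothness of $Z$ enters decisively.
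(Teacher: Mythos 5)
Your proposal is correct and follows essentially the same route as the paper, which simply invokes Weyman's Basic Theorem \cite[Theorem 5.1.2]{W} and its equivariant refinement \cite[Theorem 5.4.1]{W} for the first assertion and the corresponding statement \cite[Theorem 5.1.3]{W} for the second. Your Stein-factorization argument identifying $q_*\mc{O}_Z$ with the normalization and transferring the vanishing of higher direct images to $q'\colon Z\to Y$ is exactly the standard proof behind the cited result, so you are supplying details the paper delegates to the reference rather than taking a different approach.
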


In general, $\mc{R}^i q_*\mc{O}_Z$ fails to vanish for $i>0$ (see Example \ref{ex:codim1}). However, there are some known special cases. A result of Sutar \cite{Su} says that this holds for all (extended) Dynkin quivers with source-sink orientation. In fact, we conjecture that the condition on the orientation is unnecessary. We also conjecture that this result is sharp in the sense that for any wild quiver, there exist some $\gamma,\alpha$ such that $\mc{R}^1 q_*\mc{O}_Z\neq 0$. We will see that such examples already appear in the simplest wild quiver without oriented cycles, namely the $3$-arrow Kronecker quiver.

According to \cite[Theorem 5.1.3.c]{W}, if $q$ is birational, $\mc{R}^i q_*\mc{O}_Z=0$ for $i>0$, and $F_0=A$, then $\Rep_{\gha}(Q)$ is normal. There are only few known cases, e.g., when the quiver is Dynkin with source-sink orientation \cite{Su}. We also believe that the condition on the orientation can be dropped. We have found for each extended-Dynkin type quiver $Q$, some non-normal $\Rep_{\gha}(Q)$. For such an example for the $2$-arrow Kronecker quiver, see Example \ref{ex:rigid}.

Even if $q$ fails to be birational, the complex $F_\bullet$ still contains some information on $\Rep_{\gha}(Q)$. We restate \cite[Theorem 5.1.6]{W} in our setting

\begin{theorem} \label{T:codim} {\ }\begin{enumerate}
\item $\op{codim} \Rep_{\gha}(Q)=\ext_Q(\gamma,\beta)=\max\{i\mid F_i\neq 0\}$.
\item Assume that $r=-\innerprod{\gamma,\beta}>0$, then
$$\deg(q)\deg(\Rep_\gha(Q))=\sum_{i,j}(-1)^{i+r}\frac{(i+j)^r}{r!}h^j\big(\Gr\sm{\alpha \\ \gamma},\bigwedge^{i+j}\mc{E}^*\big),$$
where $h^j(-)$ is $\dim H^j(-)$, and by definition $\deg(q)$ is $0$ if $\hom_Q(\gamma,\beta)>0$.
\end{enumerate}
\end{theorem}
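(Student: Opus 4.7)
The plan is to recognize both statements as direct translations of \cite[Theorem 5.1.6]{W} to our setting and supply the brief verifications needed to specialize it. First, I would check that we fall into Weyman's setup: the short exact sequence \eqref{eq:exactZ} exhibits $\mc{Z}$ as a subbundle of the trivial bundle $GR$, cut out by a regular section, so that the Koszul complex $\mc{K}$ resolves $\mc{O}_Z$; and $q=\pi|_Z$ is projective with image $\Rep_\gha(Q)$. Given these, both parts follow from \cite[Theorem 5.1.6]{W}, and the task reduces to unpacking each conclusion in our notation.

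For part (1), the first equality $\op{codim}\Rep_\gha(Q)=\ext_Q(\gamma,\beta)$ is the Schofield lemma already recorded above the theorem. The second equality expresses the general fact that the length of a minimal complex of free $A$-modules equals the codimension of the support of its homology. The lower bound $\max\{i\mid F_i\neq 0\}\geqslant\op{codim}\Rep_\gha(Q)$ follows from applying the Auslander-Buchsbaum formula to each $\mc{R}^iq_*\mc{O}_Z$, all supported in $\Rep_\gha(Q)$. The matching upper bound uses generic freeness: on the open complement of $\Rep_\gha(Q)$ every $\mc{R}^iq_*\mc{O}_Z$ vanishes, so $F_\bullet$ is split exact there, and the minimality of the equivariant differentials forces $F_i=0$ beyond the codimension.

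For part (2), the formula is obtained by extracting the leading coefficient of the Hilbert polynomial of the virtual graded module $\sum_i(-1)^i\mc{R}^iq_*\mc{O}_Z$. By taking the Euler characteristic of $F_\bullet$, this virtual Hilbert polynomial equals
$$\sum_{i,j}(-1)^i h^j\bigl(\bigwedge^{i+j}\mc{E}^*\bigr)\binom{m-(i+j)+n-1}{n-1},$$
where $n=\dim\Rep_\alpha(Q)$. Its actual degree is $\dim\Rep_\gha(Q)-1=n-r-1$, so the top $r$ coefficients must cancel; extracting the degree $n-r-1$ term from the binomial factor via finite differences produces the factor $\frac{(i+j)^r}{r!}$ with the sign $(-1)^{i+r}$, matching the right-hand side. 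The left-hand side is $\deg(q)\deg\Rep_\gha(Q)$ precisely when $q$ is generically finite, i.e.\ when $\hom_Q(\gamma,\beta)=0$; otherwise the image has strictly smaller dimension than predicted, the relevant top coefficient vanishes, and one conventionally sets $\deg(q)=0$.

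The main obstacle is the upper bound in part (1) when higher direct images fail to vanish, so that $F_\bullet$ is not a genuine resolution of a single module. Weyman's argument handles this uniformly via minimality of the equivariant differentials together with standard local-cohomology bounds for sheaves supported in a given codimension, and I would invoke \cite[Theorem 5.1.6]{W} directly rather than reproduce the bookkeeping.
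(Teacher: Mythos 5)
Your proposal is correct and matches the paper's approach: the paper offers no independent proof, explicitly presenting Theorem \ref{T:codim} as a restatement of \cite[Theorem 5.1.6]{W} in this setting, exactly as you do. Your supplementary verifications (the regular section giving the Koszul resolution, the Schofield identification of the codimension, and the Hilbert-polynomial extraction for the degree formula) are consistent with Weyman's argument and with the conventions stated in the theorem.
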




The complex $F_\bullet$ can be twisted by any vector bundle on $\Gr\sm{\alpha \\ \gamma}$:
\begin{equation} \label{eq:FV}
F_i(\mc{V})=\bigoplus_{j\geqslant 0} H^j(\Gr\sm{\alpha\\ \gamma}; \bigwedge^{i+j} \mc{E}^* \otimes \mc{V})\otimes A(-i-j).
\end{equation}
The twisted complex $F_\bullet(\mc{V})$ can also be equipped with minimal differentials and $G$-equivariant structure.
We also state \cite[Theorem 5.1.4]{W} in our setting. We will use $[-]$ for shifting homological degree, i.e., $F[i]_j=F_{i+j}$.

\begin{theorem} \label{T:dual} Let $\mc{V}$ be a vector bundle $\mc{V}$ on $\Gr\sm{\alpha \\ \gamma}$, and define the dual bundle
$\mc{V}^\vee = \omega \otimes \bigwedge^{\innerprod{\gamma,\beta}_{1}} \mc{E} \otimes \mc{V}^*$, where $\omega$ is the canonical bundle on $\Gr\sm{\alpha \\ \gamma}$. Then
$$F(\mc{V}^\vee)_\bullet = F(\mc{V})_\bullet^*[\innerprod{\gamma,\beta}].$$
\end{theorem}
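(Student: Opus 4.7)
The plan is to deduce this identity from Grothendieck--Serre duality for the smooth projective morphism $\pi\colon GR\to \Rep_{\alpha}(Q)$ (fibre $\Gr\sm{\alpha\\ \gamma}$), combined with the self-duality of the Koszul complex $\mc{K}=\bigwedge^{\bullet}p^{*}\mc{E}^{*}$ that resolves $\mc{O}_{Z}$. The relevant observation is that $F_{\bullet}(\mc{V})$ represents $R\pi_{*}(\mc{K}\otimes p^{*}\mc{V})$ as a complex of graded $A$-modules---this is the same computation behind \eqref{eq:FV}---so the task reduces to commuting the assignment $\mc{V}\rightsquigarrow\mc{V}^{\vee}$ with $R\pi_{*}$.

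The first step is to use the fibrewise isomorphism $\bigwedge^{k}\mc{E}^{*}\otimes\det\mc{E}\cong\bigwedge^{N-k}\mc{E}\cong(\bigwedge^{N-k}\mc{E}^{*})^{*}$, where $N:=\innerprod{\gamma,\beta}_{1}=\rank\mc{E}$, to establish the Koszul self-duality $R\Hom(\mc{K},\mc{O}_{GR})\simeq \mc{K}\otimes p^{*}\!\det\mc{E}\,[-N]$. Tensoring with $p^{*}\mc{V}^{*}\otimes p^{*}\omega$ and recognising $\omega\otimes\det\mc{E}\otimes\mc{V}^{*}=\mc{V}^{\vee}$ turns this into
$$R\Hom(\mc{K}\otimes p^{*}\mc{V},\,p^{*}\omega)\ \simeq\ \mc{K}\otimes p^{*}\mc{V}^{\vee}\,[-N].$$
Applying $R\pi_{*}$ on both sides and invoking Grothendieck duality for $\pi$ (smooth projective of relative dimension $d=\innerprod{\gamma,\beta}_{0}$ with relative dualizing sheaf $p^{*}\omega$) gives
$$R\pi_{*}(\mc{K}\otimes p^{*}\mc{V}^{\vee})[-N]\ \simeq\ R\Hom_{A}(F_{\bullet}(\mc{V}),A)[-d],$$
so that $F_{\bullet}(\mc{V}^{\vee})\simeq F_{\bullet}(\mc{V})^{*}[N-d]$ in cohomological notation. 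Translating to the paper's homological convention $F[i]_{j}=F_{i+j}$ and using $\innerprod{\gamma,\beta}=d-N$ converts this into $F(\mc{V}^{\vee})_{\bullet}=F(\mc{V})_{\bullet}^{*}[\innerprod{\gamma,\beta}]$.

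The main technical obstacle is bookkeeping: cohomological versus homological shifts, internal $A$-module grading, and compatibility with the $G$-equivariant minimal differentials on both sides. A more elementary alternative that avoids the Grothendieck duality formalism is to verify the statement term by term: expand $F_{i}(\mc{V}^{\vee})$ using \eqref{eq:FV}, substitute $\bigwedge^{i+j}\mc{E}^{*}\otimes\bigwedge^{N}\mc{E}\cong(\bigwedge^{N-i-j}\mc{E}^{*})^{*}$, apply Serre duality $H^{j}(\omega\otimes\mc{F}^{*})\cong H^{d-j}(\mc{F})^{*}$ on $\Gr\sm{\alpha\\ \gamma}$, and re-index $k=d-j$ to match the relevant summand of $F(\mc{V})_{\bullet}^{*}[\innerprod{\gamma,\beta}]$; the differentials then agree by naturality of Serre duality and the functoriality of the KLW construction in \cite{W}.
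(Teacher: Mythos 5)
Your argument is correct in outline, but note that the paper offers no proof of this statement at all: it is explicitly a restatement of \cite[Theorem 5.1.4]{W} in the quiver setting, so what you have written is in substance a reconstruction of the proof of that cited general duality theorem rather than a specialization of it. The two ingredients you use --- the Koszul self-duality $\bigwedge^{k}\mc{E}^{*}\otimes\det\mc{E}\cong\bigwedge^{N-k}\mc{E}$ with $N=\innerprod{\gamma,\beta}_{1}=\rank\mc{E}$, and Grothendieck--Serre duality along the smooth projective morphism $\pi\colon GR\to\Rep_{\alpha}(Q)$ of relative dimension $d=\innerprod{\gamma,\beta}_{0}$ with relative dualizing sheaf the pullback of $\omega$ --- are exactly the ingredients of Weyman's proof, and the final shift $d-N=\innerprod{\gamma,\beta}$ matches the statement (a sanity check: for $K_2$, $\alpha=(2,3)$, $\gamma=(1,1)$ one gets $\innerprod{\gamma,\beta}=-1$ and the identity exchanges $F_0$ with $F_1^{*}$, consistent with Example \ref{ex:rigid}). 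Two caveats, both of which you already flag as bookkeeping: first, the sign of the shift flips between cohomological and homological conventions, so one must be careful that $[\innerprod{\gamma,\beta}]$ and not $[-\innerprod{\gamma,\beta}]$ comes out at the end; second, the derived-category argument a priori yields only a quasi-isomorphism $R\pi_{*}(\mc{K}\otimes p^{*}\mc{V}^{\vee})\simeq R\Hom_{A}(R\pi_{*}(\mc{K}\otimes p^{*}\mc{V}),A)[\innerprod{\gamma,\beta}]$, and upgrading this to the stated equality of complexes with their minimal $G$-equivariant differentials requires the uniqueness statements of \cite[Theorems 5.1.2 and 5.4.1]{W}, which apply verbatim here. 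Your term-by-term alternative via Serre duality on $\Gr\sm{\alpha\\ \gamma}$ is also fine and is essentially how one verifies the dual-weight formula $\omega^{\vee}=(m\beta_2-\alpha_1-\omega_1;m\gamma_1-\alpha_2-\omega_2)$ used later in Section \ref{S:Kron}.
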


\section{The Case of Kronecker Quivers} \label{S:Kron}
Let us consider the case when $Q$ is the $m$-arrow Kronecker quiver $K_m$.
$$\Km$$
If $\gamma_1=\alpha_1$, then $\Rep_{\gha}(K_m)$ is isomorphic to the usual determinantal variety of rank $\gamma_2$ maps from $k^{m\alpha_1}$ to $k^{\alpha_2}$. We also have the dual situation if $\gamma_2=0$.

We consider certain twisted version of the complex $F_\bullet$. For fixed {\em weight} $\omega=(\omega_1;\omega_2)\in \mathbb{Z}^{\beta_1}\times \mathbb{Z}^{\gamma_2}$, we put the vector bundle $S^{\omega_1}\mc{Q}_1 \otimes S^{\omega_2}\mc{S}_2^*$ in place of $\mc{V}$ in \eqref{eq:FV}.
The $i$th term of the twisted complex $F_\bullet^\omega$ is
$$F_i^\omega =\bigoplus_{j\geqslant 0} H^j(\Gr\sm{\alpha\\ \gamma}; \bigwedge^{i+j} \mc{E}^* \otimes S^{\omega_1}\mc{Q}_1 \otimes S^{\omega_2}\mc{S}_2^*)\otimes A(-i-j).$$
By \cite[Theorem 5.1.2.b, 5.1.3.a]{W}, if $F_\bullet^\omega$ has no negative degree terms, then it resolves a module supported on $\Rep_\gha(Q)$. We denote this module by $M_{\gamma,\alpha}^\omega$.
If $\omega=(w_1^{\beta_1};w_2^{\gamma_2})$ for $w_1,w_2\in\mathbb{Z}$, then the vector bundle $S^{\omega_1}\mc{Q}_1 \otimes S^{\omega_2}\mc{S}_2^*$ is a line bundle.
We call such a weight a {\em line weight}, and simply write $(w_1;w_2)$.
Note that we get all line bundles on $\Gr\sm{\alpha\\ \gamma}$ this way because the Picard group of any ordinary Grassmannian is $\mb{Z}$.
In the proposition below, we use $\lambda'$ to denote the conjugate partition of $\lambda$.

\begin{proposition} \label{P:Fi} The $i$th term of the complex $F^\omega_\bullet$ is given by
\begin{align*}\bigoplus_{\substack{|\lambda|=|\mu|=|\nu|\\=i+\ell(\sigma_1)+\ell(\sigma_2)}} S^{\sigma_1(\omega_1,\mu)}R_1 \otimes S^{\sigma_2(\omega_2,\nu)}R_2^* \otimes \big(g_{\mu,\nu}^{\lambda} S^{\lambda'}R_{12}\big) \otimes A(-|\lambda|).
\end{align*}
In particular, if $\omega=(w_1;w_2)$ is a line weight, then $F^\omega_i$ is given by
\begin{align*}\bigoplus_{\substack{0\leqslant t_1\leqslant \gamma_1,\\ 0\leqslant t_2\leqslant \beta_2}}
\bigoplus_{\substack{\mu\in P(\gamma_1,\beta_1,t_1,w_1),\nu\in P(\beta_2,\gamma_2,t_2,w_2) \\ |\lambda|=|\mu|=|\nu|=i+\beta_1 t_1+\gamma_2 t_2}}
S^{\mu^\circ}R_1  \otimes S^{\nu^\circ}R_2^*  \otimes \big(g_{\mu,\nu}^{\lambda} S^{\lambda'}R_{12}\big) \otimes A(-|\lambda|),
\end{align*}
where \begin{align}
\label{eq:circ1} \mu^\circ:&= \sigma(t_1)\circ (w^{\beta_1},\mu)=(\mu_1-\beta_1,\dots,\mu_{t_1}-\beta_1,(t_1+w_1)^{\beta_1},\mu_{t_1+1},\dots,\mu_{\gamma_1}),\\
\label{eq:circ2} \nu^\circ:&= \sigma(t_2)\circ (w^{\gamma_2},\nu)=(\nu_1-\gamma_2,\dots,\nu_{t_2}-\gamma_2,(t_2+w_2)^{\gamma_2},\nu_{t_2+1},\dots,\nu_{\beta_2}).
\end{align}
Assume that $F_\bullet^\omega$ has no negative degree terms. Then the annihilator of $M_{\gamma,\alpha}^\omega$ is the prime ideal defining $\Rep_\gha(Q)$, and the maximal minors of $d_1:F_1^\omega\to F_0^\omega$ defines $\Rep_\gha(Q)$ set-theoretically.
\end{proposition}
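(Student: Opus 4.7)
The plan is to compute $F_i^\omega$ directly from \eqref{eq:FV} by decomposing the bundle $\bigwedge^{i+j}\mc{E}^*\otimes\mc{V}$ and then invoking Borel-Weil-Bott on the product of Grassmannians. For $K_m$ the bundle $\mc{E}^*$ is a triple tensor product $\mc{S}_1\otimes\mc{Q}_2^*\otimes R_{12}$, so applying Cauchy's formula against the trivial $R_{12}$-factor and then expanding the residual $S^\lambda(\mc{S}_1\otimes\mc{Q}_2^*)$ via the Kronecker identity \eqref{eq:KC} gives
$$\bigwedge^{i+j}\mc{E}^*=\bigoplus_{|\mu|=|\nu|=|\lambda|=i+j}g_{\mu,\nu}^{\lambda}\, S^{\mu}\mc{S}_1\otimes S^{\nu}\mc{Q}_2^*\otimes S^{\lambda'}R_{12}.$$
After tensoring with $\mc{V}=S^{\omega_1}\mc{Q}_1\otimes S^{\omega_2}\mc{S}_2^*$ the bundle splits as an external product over the two Grassmannian factors, so Künneth expresses $H^j$ as a sum over $j=j_1+j_2$ of tensor products of factor-wise cohomologies. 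Borel-Weil-Bott on each factor then concentrates the cohomology in degrees $\ell(\sigma_1),\ell(\sigma_2)$ with values $S^{\sigma_1\circ(\omega_1,\mu)}R_1$ and $S^{\sigma_2\circ(\omega_2,\nu)}R_2^*$, and assembling these with $i+j=|\mu|=|\nu|=|\lambda|$ yields the first formula.

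The line-weight formula is the specialization $\omega_1=(w_1^{\beta_1}),\omega_2=(w_2^{\gamma_2})$, where $S^{\omega_1}\mc{Q}_1=\det^{w_1}\mc{Q}_1$ and $S^{\omega_2}\mc{S}_2^*=\det^{w_2}\mc{S}_2^*$ become line bundles. Corollary \ref{C:Bott} then supplies the nonvanishing conditions $\mu\in P(\gamma_1,\beta_1,t_1,w_1)$ and $\nu\in P(\beta_2,\gamma_2,t_2,w_2)$, the cohomological degrees $\ell(\sigma(t_1))=\beta_1 t_1$ and $\ell(\sigma(t_2))=\gamma_2 t_2$, and the explicit Schur weights $\mu^\circ$ and $\nu^\circ$ displayed in \eqref{eq:circ1}--\eqref{eq:circ2}.

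For the final set-theoretical assertion, under the non-negativity hypothesis \cite[Theorem 5.1.2.b, 5.1.3.a]{W} guarantees that $F_\bullet^\omega$ is a $G$-equivariant minimal free resolution of $M_{\gamma,\alpha}^\omega=H_0(F_\bullet^\omega)=q_*(\mc{V}|_Z)$, whose support is $\Rep_\gha(Q)$. Since $Z$ is integral, $q_*\mc{O}_Z$ is a reduced sheaf of algebras on $\Rep_\alpha(Q)$, so the scheme-theoretic image of $q$ is the reduced subscheme $\Rep_\gha(Q)$; this makes $M_{\gamma,\alpha}^\omega$ an $A/I$-module for $I=I(\Rep_\gha(Q))$ prime, yielding $I\subseteq\op{Ann}(M_{\gamma,\alpha}^\omega)$. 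The reverse inclusion follows because $\mc{V}|_Z$ is a locally free sheaf on the integral $Z$ and $q$ surjects onto $\Rep_\gha(Q)$, so $M_{\gamma,\alpha}^\omega$ is nonzero at the generic point of $\Rep_\gha(Q)$. The description by the maximal minors of $d_1$ is then the $0$-th Fitting-ideal statement: the ideal of maximal minors generates $\op{Fit}_0(M_{\gamma,\alpha}^\omega)$, whose radical equals $\sqrt{\op{Ann}(M_{\gamma,\alpha}^\omega)}=I$.

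The main technical obstacle will be bookkeeping: tracking the partition conjugation ($\lambda$ vs.\ $\lambda'$) dictated by the direction of Cauchy used, the cohomological degree shift $j=\ell(\sigma_1)+\ell(\sigma_2)$, and the size constraint $i+j=|\mu|=|\nu|=|\lambda|$ uniformly through all three stages. The only genuinely nonroutine ingredient is the triple Cauchy expansion that brings the Kronecker coefficients into the picture; once that is in hand, the rest is routine application of Bott's algorithm and standard commutative algebra.
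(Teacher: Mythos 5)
Your proposal is correct and follows essentially the same route as the paper: Cauchy's formula on the triple tensor $\mc{S}_1\otimes\mc{Q}_2^*\otimes R_{12}$, the Kronecker identity \eqref{eq:KC}, K\"unneth plus Borel--Weil--Bott (Corollary \ref{C:Bott} for line weights), and the Fitting-ideal argument for the maximal minors, which the paper simply cites as \cite[Proposition 20.7]{E}. Your spelled-out treatment of the annihilator (via $I\subseteq\op{Ann}$ from the scheme-theoretic image and $\op{Ann}\subseteq I$ from nonvanishing at the generic point) is a reasonable expansion of what the paper leaves implicit.
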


\begin{proof}
\begin{align*} \bigwedge^{n} \mc{E}^*
&= \bigwedge^{n} \Big( \mc{S}_{1}\otimes \mc{Q}_{2}^* \otimes R_{12} \Big) \notag = \bigoplus_{|\lambda|=n}  S^{\lambda}(\mc{S}_{1} \otimes \mc{Q}_{2}^*)\otimes S^{\lambda'}R_{12}, & \text{(Cauchy formula)}\\
&= \bigoplus_{|\lambda|=|\mu|=|\nu|=n}  \big(g_{\mu,\nu}^{\lambda} S^{\lambda'}R_{12}\big) \otimes
  S^{\mu}\mc{S}_{1} \otimes S^{\nu}\mc{Q}_{2}^*.  & \eqref{eq:KC}
\end{align*}

\begin{align*}
F_i^\omega &=\bigoplus_{j\geqslant 0} H^j(\Gr\sm{\alpha\\ \gamma}; \bigwedge^{i+j} \mc{E}^* \otimes S^{\omega_1}\mc{Q}_1 \otimes S^{\omega_2}\mc{S}_2^*)\otimes A(-i-j), \\
&=\bigoplus_{\substack{j\geqslant 0\\|\lambda|=|\mu|=|\nu|=i+j}}
H^j\big(\Gr\sm{\alpha\\ \gamma};  (S^{\omega_1}\mc{Q}_1 \otimes S^{\mu}\mc{S}_{1}) \otimes (S^{\omega_2}\mc{S}_2^*\otimes S^{\nu}\mc{Q}_{2}^*) \otimes (g_{\mu,\nu}^{\lambda} S^{\lambda'}R_{12}) \big) \otimes A(-|\lambda|), \\
\intertext{Since $g_{\mu,\nu}^{\lambda} S^{\lambda'}R_{12}$ is just a vector space, we can pull it out}
&=\bigoplus_{\substack{j\geqslant 0\\|\lambda|=|\mu|=|\nu|=i+j}}
H^j\big(\Gr\sm{\alpha\\ \gamma};  (S^{\omega_1}\mc{Q}_1 \otimes S^{\mu}\mc{S}_{1}) \otimes (S^{\omega_2}\mc{S}_2^*\otimes S^{\nu}\mc{Q}_{2}^*) \big)\otimes (g_{\mu,\nu}^{\lambda} S^{\lambda'}R_{12}) \otimes A(-|\lambda|), \\
& =\bigoplus_{\substack{|\lambda|=|\mu|=|\nu|\\=i+\ell(\sigma_1)+\ell(\sigma_2)}} S^{\sigma_1(\omega_1,\mu)}R_1 \otimes S^{\sigma_2(\omega_2,\nu)}R_2^* \otimes \big(g_{\mu,\nu}^{\lambda} S^{\lambda'}R_{12}\big) \otimes A(-|\lambda|). \quad\text{(K\"{u}nneth formula)}
\end{align*}
The statement for line weights follows from Corollary \ref{C:Bott}. The statement about maximal minors follows from \cite[Proposition 20.7]{E}.
\end{proof}

There is an obvious symmetry from the formula of $F_i^\omega$. If we set $\gamma'=(\beta_2,\beta_1),\beta'=(\gamma_2,\gamma_1)$, and $\omega'=(\omega_2,\omega_1)$, then we essentially get the same complex.

\begin{definition} A weight $\omega=(\omega_1;\omega_2)$ is called {\em Cohen-Macaulay} if $F_\bullet^\omega$ has no negative degree term and the length of $F_\bullet^\omega$ is the codimension of $\Rep_{\gha}(Q)$, i.e., $M_{\gamma,\alpha}^{\omega}(Q)$ is maximal Cohen-Macaulay.
\end{definition}

\begin{lemma} \label{L:nonnegative} $F_\bullet^\omega$ has no term in negative degree if the line weight $\omega=(w_1;w_2)$ satisfies $(\beta_1-w_1)^2+(\gamma_2-w_2)^2<8$ or $(\beta_1-w_1)^2+(\gamma_2-w_2)^2=8$ with any of the following: $(1).\ \beta_1\neq \gamma_2$, $(2).\ w_1\neq w_2$, and $(3).\ w_1+w_2>m-3$.
\end{lemma}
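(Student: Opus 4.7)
The plan is to argue by contradiction: suppose $F^\omega_\bullet$ has a term in negative homological degree. By Proposition~\ref{P:Fi}, such a term is indexed by data $(t_1,t_2,\mu,\nu,\lambda)$ with $\mu\in P(\gamma_1,\beta_1,t_1,w_1)$, $\nu\in P(\beta_2,\gamma_2,t_2,w_2)$, $|\mu|=|\nu|=|\lambda|<\beta_1 t_1+\gamma_2 t_2$, $g_{\mu,\nu}^\lambda>0$, and $\lambda_1\le m$ (otherwise the factor $S^{\lambda'}R_{12}=\bigwedge^{\lambda_1}R_{12}$ already vanishes). The definition of $P$ gives the lower bounds $|\mu|\ge t_1(\beta_1+t_1+w_1)$ and $|\nu|\ge t_2(\gamma_2+t_2+w_2)$; feeding these into the negative-degree hypothesis produces the two strict integer inequalities
\[
t_1(t_1+w_1)<\gamma_2 t_2,\qquad t_2(t_2+w_2)<\beta_1 t_1.
\]

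Writing $a=\beta_1-w_1$ and $b=\gamma_2-w_2$, I will add these inequalities, using the integrality gap of at least one on each side, to obtain $t_1^2+t_2^2+2\le at_1+bt_2$, and then bound the right-hand side by Cauchy--Schwarz. Setting $s=t_1^2+t_2^2\ge 1$, this becomes the quadratic $(s+2)^2\le (a^2+b^2)s$, whose discriminant in $s$ factors as $(a^2+b^2)\bigl((a^2+b^2)-8\bigr)$. When $a^2+b^2<8$ the discriminant is strictly negative, so no $s>0$ is admissible and the strict case is done. At the boundary $a^2+b^2=8$ the discriminant vanishes, forcing $s=2$ together with equality in Cauchy--Schwarz; combined with $a,b\in\mathbb{Z}$, this pins down $(t_1,t_2)=(1,1)$ and $a=b=2$. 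Revisiting the individual inequalities at $(1,1)$ then shows that both can fail only when $\gamma_2-w_1=\beta_1-w_2=2$, which together with $a=b=2$ gives $\beta_1=\gamma_2$ and $w_1=w_2$; this is ruled out by either condition~(1) or (2).

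The remaining subcase $\beta_1=\gamma_2$, $w_1=w_2=w$ cannot be closed by numerics alone and needs the Kronecker structure. At $(t_1,t_2)=(1,1)$ the sandwich $2w+3=\beta_1+1+w\le|\mu|<2\beta_1=2w+4$ collapses $|\mu|$ to the single integer $2w+3$, which together with $\mu_1\ge 2w+3$ forces $\mu=(2w+3)$ to be a single row; by symmetry $\nu=(2w+3)$. Since $S_{(n)}$ is the trivial $S_n$-representation, $g^\lambda_{\mu,\nu}=\delta_{\lambda,(2w+3)}$, so $\lambda_1=2w+3$. Condition~(3), $w_1+w_2=2w>m-3$, then yields $\lambda_1>m$, so $S^{\lambda'}R_{12}$ vanishes and the purported negative-degree term does not exist. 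The main obstacle I foresee is precisely this last integer squeeze: one has to verify carefully that the admissible interval for $|\mu|$ collapses to a single value, so that $\mu$ is forced to be a one-row partition and the Kronecker vanishing supplied by (3) can actually be invoked.
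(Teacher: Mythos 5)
Your proof is correct and follows essentially the same route as the paper: both reduce to the inequality system $t_1(t_1+w_1)\leqslant \gamma_2 t_2-1$, $t_2(t_2+w_2)\leqslant \beta_1 t_1-1$ coming from Proposition \ref{P:Fi} and Corollary \ref{C:Bott}, show positivity of the resulting quadratic when $(\beta_1-w_1)^2+(\gamma_2-w_2)^2<8$ (you via Cauchy--Schwarz, the paper by minimizing $f(t_1,t_2)$ directly), isolate the boundary case $(t_1,t_2)=(1,1)$, $\beta_1-w_1=\gamma_2-w_2=2$, and dispose of conditions (1) and (2) by the individual inequalities. The only substantive difference is the endgame for condition (3): the paper invokes the Littlewood--Murnaghan inequality $\lambda_1\geqslant \mu_1+\nu_1-|\lambda|$ to get $\lambda_1\geqslant 3+w_1+w_2>m$, whereas you squeeze $|\mu|=|\nu|=2w+3$ and conclude $\mu=\nu=(2w+3)$ are single rows, so that $g_{\mu,\nu}^\lambda=\delta_{\lambda,(2w+3)}$ forces $\lambda_1=2w+3>m$ --- a slightly more elementary argument reaching the same vanishing of $S^{\lambda'}R_{12}$.
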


\begin{proof} We see from Proposition \ref{P:Fi} that $\mu\in P(\gamma_1,\beta_1,t_1,w_1),\nu\in P(\beta_2,\gamma_2,t_2,w_2)$, so
$|\mu|\geqslant (\beta_1+t_1+w_1)t_1, |\nu|\geqslant (\gamma_2+t_2+w_2)t_2$.
It is easy to see that a necessary condition for $F_{-1}^\omega$ nonvanishing is that
\begin{equation} \label{eq:ineq} \begin{cases}
-1+\gamma_2t_2 \geqslant t_1(t_1+w_1) \\-1+\beta_1t_1 \geqslant t_2(t_2+w_2) \end{cases}
\quad \text{for $0\leqslant t_1\leqslant \gamma_1,0\leqslant t_2\leqslant \beta_2$.}
\end{equation}
So if $f(t_1,t_2)=t_1^2+(w_1-\beta_1)t_1+t_2^2+(w_2-\gamma_2)t_2+2>0$, then $F_\bullet^\omega$ has no negative degree term.
Calculus tells us that $f$ has a global minimum $2-\frac{1}{4}(\beta_1-w_1)^2-\frac{1}{4}(\gamma_2-w_2)^2$. The condition (1) follows.

If $(\beta_1-w_1)^2+(\gamma_2-w_2)^2=8$, then $\beta_1-w_1=\gamma_2-w_2=2$. It is clear that $f>0$ unless $t_1=t_2=1$.
If $t_1=t_2=1$, then it follows from \eqref{eq:ineq} that $F_{-1}$ vanishes unless $\beta_1=\gamma_2$ and $w_1=w_2$.
Since $t_1=t_2=1$, we can effectively apply Littlewood and Murnaghan's inequality on Kronecker coefficients, which implies that
$\lambda_1\geqslant \mu_1+\nu_1-|\lambda|\geqslant(\beta_1+1+w_1)+(\gamma_2+1+w_2)-(-1+\beta_1+\gamma_2)=3+w_1+w_2$. So if $3+w_1+w_2>m$, then $S^{\lambda'}(k^m)$ has to vanish.
\end{proof}

Now for each weight $\omega=(\omega_1;\omega_2)$, we introduce the dual weight $\omega^\vee=(m\beta_2-\alpha_1-\omega_1;m\gamma_1-\alpha_2-\omega_2)$.
We justify this definition as follows. Consider the dual vector bundle $\mc{V}^\vee = \omega \otimes \bigwedge^{\innerprod{\gamma,\beta}_{1}} \mc{E} \otimes \mc{V}^*$.
The canonical bundle of $\Gr\sm{\alpha\\ \gamma}$ is
$$\omega=\bigotimes_{v=1,2}(\bigwedge^{\gamma_v}\mc{S}_v)^{\otimes \beta_v} \otimes (\bigwedge^{\beta_v}\mc{Q}_v^*)^{\otimes \gamma_v},$$
and
$$\bigwedge^{\innerprod{\gamma,\beta}_{1}} \mc{E} = (\bigwedge^{\gamma_1}\mc{S}_{1}^*)^{\otimes m\beta_2} \otimes (\bigwedge^{\beta_2}\mc{Q}_{2})^{\otimes m\gamma_1} \otimes (\bigwedge^{m}R_{12})^{\otimes \gamma_1\beta_2}.$$
So \begin{align*}
\mc{V}^\vee &\cong (\bigwedge^{\beta_1} \mc{Q}_1)^{\otimes (m\beta_2-\alpha_1)}  \otimes (\bigwedge^{\gamma_2} \mc{S}_2^*)^{\otimes (m\gamma_1-\alpha_2)} \otimes \mc{V}^*,
\end{align*}
and hence by Theorem \ref{T:dual}
$$F^{\omega^\vee}_\bullet=(F^{\omega}_\bullet)^*[\innerprod{\gamma,\beta}].$$
Then it follows from Lemma \ref{L:nonnegative} that

\begin{lemma} If the dual of a line weight $\omega$ satisfies the condition in Lemma \ref{L:nonnegative},
then $\max\{i\mid F_i^\omega\neq 0\}=-\innerprod{\gamma,\beta}$.
\end{lemma}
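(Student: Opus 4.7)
The plan is to combine Lemma \ref{L:nonnegative} (applied to $\omega^\vee$) with the duality of Theorem \ref{T:dual}, obtaining both the upper and lower bounds on $\max\{i \mid F^\omega_i \neq 0\}$ simultaneously. Throughout, set $r := -\innerprod{\gamma,\beta}$.

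For the upper bound, I would apply Lemma \ref{L:nonnegative} with the weight $\omega^\vee$ in place of $\omega$: its hypothesis forces $F^{\omega^\vee}_j = 0$ for every $j < 0$. Theorem \ref{T:dual} then gives $F^{\omega^\vee}_\bullet = (F^\omega_\bullet)^*[\innerprod{\gamma,\beta}]$, and unwinding the convention $F[i]_j = F_{i+j}$ together with the usual cochain dualization yields the term-by-term identification $F^{\omega^\vee}_j \cong (F^\omega_{r-j})^*$. In particular, non-vanishing in the two complexes is linked by the involution $j \mapsto r - j$, so the vanishing of $F^{\omega^\vee}_j$ for $j < 0$ translates into $F^\omega_k = 0$ for $k > r$, i.e., $\max\{i \mid F^\omega_i \neq 0\} \leq r$.

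For the matching lower bound, I would apply Proposition \ref{P:Fi} to $\omega^\vee$: since $F^{\omega^\vee}_\bullet$ is concentrated in non-negative degrees, it is the minimal free $A$-resolution of its zeroth homology $M^{\omega^\vee}_{\gamma,\alpha}$, whose annihilator is the proper prime ideal defining $\Rep_\gha(Q) \subsetneq \Rep_\alpha(Q)$. Because this annihilator is proper, $M^{\omega^\vee}_{\gamma,\alpha}$ cannot be zero, which forces $F^{\omega^\vee}_0 \neq 0$. Dualizing once more through the identification above then yields $F^\omega_r \neq 0$, hence $\max\{i \mid F^\omega_i \neq 0\} \geq r$, and combining the two bounds gives the claim.

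The only real obstacle is the careful bookkeeping of Theorem \ref{T:dual}, so that the correspondence $F^{\omega^\vee}_j \leftrightarrow (F^\omega_{r-j})^*$ is stated with the correct offset $r$; once this is in hand, both bounds fall out of Lemma \ref{L:nonnegative} and Proposition \ref{P:Fi} with no further computation.
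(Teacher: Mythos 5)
Your argument is correct and is exactly the route the paper intends: the paper offers no written proof beyond ``it follows from Lemma \ref{L:nonnegative}'' via the duality $F^{\omega^\vee}_\bullet=(F^{\omega}_\bullet)^*[\innerprod{\gamma,\beta}]$, which is precisely your upper bound. Your explicit lower-bound step --- that $F^{\omega^\vee}_0\neq 0$ because $M^{\omega^\vee}_{\gamma,\alpha}$ has a prime (hence proper) annihilator and so is nonzero, whence $F^\omega_{-\innerprod{\gamma,\beta}}\neq 0$ by duality --- is a correct filling-in of a detail the paper leaves implicit.
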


\begin{theorem} Assume that $\hom_{K_m}(\gamma,\beta)=0$. If a line weight $\omega$ and its dual satisfy the condition in Lemma \ref{L:nonnegative}, then the complex $F_\bullet^\omega$ resolves a maximal Cohen-Macaulay module supported on $\Rep_\gha(K_m)$.
\end{theorem}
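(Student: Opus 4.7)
The plan is to chain the two preceding lemmas with a standard Auslander--Buchsbaum formula computation; the argument should be essentially formal. First I would apply Lemma \ref{L:nonnegative} directly to $\omega$, whose hypothesis kills every term of $F_\bullet^\omega$ in negative homological degree. The concluding statement of Proposition \ref{P:Fi} then upgrades $F_\bullet^\omega$ to a $G$-equivariant minimal graded free resolution of a module $M := M_{\gamma,\alpha}^\omega$ whose annihilator is the prime ideal of $\Rep_{\gha}(K_m)$, so $M$ is a finitely generated $A$-module with support exactly $\Rep_{\gha}(K_m)$, where $A := k[\Rep_\alpha(K_m)]$.

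Second, I would invoke the preceding (unlabelled) lemma, whose hypothesis on $\omega^\vee$ is exactly the second half of the assumption; its conclusion pins down the length of the resolution as
\[
\op{pd}_A(M) \;=\; \max\{i \mid F_i^\omega \neq 0\} \;=\; -\innerprod{\gamma,\beta}.
\]
The Euler-form identity $\innerprod{\gamma,\beta} = \hom_{K_m}(\gamma,\beta) - \ext_{K_m}(\gamma,\beta)$ together with the assumption $\hom_{K_m}(\gamma,\beta)=0$ rewrites this as $\op{pd}_A(M) = \ext_{K_m}(\gamma,\beta)$, which by Theorem \ref{T:codim}(1) equals $\op{codim}_A \Rep_{\gha}(K_m)$.

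To finish, since $A$ is a polynomial (hence regular Cohen--Macaulay) ring, the Auslander--Buchsbaum formula applied to $M$ yields
\[
\op{depth}_A(M) \;=\; \dim A - \op{pd}_A(M) \;=\; \dim \Rep_{\gha}(K_m),
\]
which is precisely the maximal Cohen--Macaulay condition on the support $\Rep_{\gha}(K_m)$. I expect no genuine obstacle at this step: all the real work has already been absorbed into Lemma \ref{L:nonnegative} (the inequality analysis on weights and the Littlewood--Murnaghan bound) and into Theorem \ref{T:dual}. The only minor check is that $M\neq 0$, which is immediate from Proposition \ref{P:Fi} because the summand of $F_0^\omega$ indexed by $t_1=t_2=0$ and $\mu=\nu=\lambda=\emptyset$ contributes a nonzero free rank-one $A$-module.
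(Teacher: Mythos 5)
Your proposal is correct and follows exactly the route the paper intends: the theorem is stated as an immediate consequence of Lemma \ref{L:nonnegative}, the unlabelled dual lemma, and Theorem \ref{T:codim}(1), with the Auslander--Buchsbaum step implicit in the paper's definition of a Cohen--Macaulay weight. (One small caveat: your final remark that the $t_1=t_2=0$, $\mu=\nu=\emptyset$ summand always survives fails when $w_1$ or $w_2$ is negative, since $\emptyset\in P(s,q,0,w)$ requires $w\geqslant 0$; but $M\neq 0$ follows anyway from minimality of the differentials once the complex is known to be nonzero in top degree.)
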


In all examples below, we use a computer program based on \cite{F} to calculate the Kronecker coefficients. We will use the shorthand $g_{\mu,\nu}^{\lambda}(\mu;\nu;\lambda')$ for a typical summand $S^{\mu}R_1  \otimes S^{\nu}R_2^*  \otimes \big(g_{\mu,\nu}^{\lambda} S^{\lambda'}R_{12}\big) \otimes A(-|\lambda|)$. We may wrap several $S^{\lambda'}R_{12}$'s with common $\mu,\nu$ in one pair of parentheses.

\begin{example} Consider $K_3$ with $\alpha=(3,3)$.
There are only three $\gamma$'s up to symmetry such that $\Rep_\gha(K_3)$ is nontrivial. They are $(3,2),(2,1),$ and $(2,2)$.
$(3,2)$ is uninteresting because it is a usual determinantal variety.
We found that for $\gamma=(2,2)$, the terms of $F_\bullet$ are
$$F_0 = (0;0;0)\oplus(1^3;1^3;2,1)\oplus (2,1;1^3;1^3),\ F_1 = (2,1^2;2,1^2;2,1^2),\ F_2 = (2^3;4,1^2;2^3).$$
As an illustration, let us compute $F_1$ from Proposition \ref{P:Fi}.
We first find all $(t_1,t_2,\mu,\nu)$ such that
$$0\leqslant t_1\leqslant \gamma_1,0\leqslant t_2\leqslant \beta_2;\ \mu\in P(\gamma_1,\beta_1,t_1,0),\nu\in P(\beta_2,\gamma_2,t_2,0).$$
We get $t_1=t_2=1, \mu=(3,1) \text{ or } (4),\nu=(4)$.
Then we compute the Kronecker coefficients $g_{\mu,\nu}^{\lambda}$ for each solution $(t_1,t_2,\mu,\nu)$.
Since the partition $(4)$ corresponds to the trivial representation of $S_4$,
the only nonzero $g_{\mu,\nu}^{\lambda}$ we can get are
$g_{(4),(4)}^{(4)}=g_{(3,1),(4)}^{(3,1)}=1$.
But $S^{(4)'}(R_{12})=S^{(1^4)}(k^3)$ vanishes.
So we only apply the formula \eqref{eq:circ1} and \eqref{eq:circ2} to $\mu=(3,1),\nu=(4)$, and get $\mu^\circ=(2,1^2),\nu^\circ=(2,1^2)$.

Using the $G$-equivariant property, it is not hard to make the differentials explicit.
For example, the differential $d_2:F_2\to F_1$ are induced by multiplying
$$(1^2; 2; 1^2)\subset S^2(k^3\otimes k^3\otimes k^3)\subset A.$$
We denote this by $F_1^{}\xleftarrow{\cdot (1^2; 2; 1^2)} F_2^{}.$
The complex $F_\bullet$ is
$$F_0^{}\xleftarrow{\cdot(2,1^2;2,1^2;2,1^2)\oplus(1;1;1)\oplus(1;1;1)} F_1^{}\xleftarrow{\cdot (1^2; 2; 1^2)} F_2^{}.$$
We can check using Proposition \ref{P:birational} that $q$ is birational. So $F_\bullet$ is the minimal free resolution of the normalization of $\Rep_\gha(K_3)$.
With a little effort, we can explicitly identify differentials with matrices in $A$.
In general, finding the matrix representation of differentials is non-trivial. But when the differential is linear, it is always possible \cite{Sa}.
We can easily obtain the set-theoretical defining equations of $\Rep_\gha(K_3)$ from the twisted complex
$$F_0^{(2;1)}=(2;1^2;0)\xleftarrow{\cdot(1;1;1)} F_1^{(2;1)}=(2,1;1^3;1)\xleftarrow{\cdot(2,1;1^3;2,1)} F_2^{(2;1)}=(2^3;2^3;2^2).$$

Now let $\gamma=(2,1)$, then
\begin{align*}
F_0 &= (0;0;0)\oplus(1^2;1^2;1^2)\\
F_1 &= (1^3;1^3;(1^3\oplus 2,1 \oplus 3))\oplus (2,1;2,1;1^3)\oplus (2,1;1^3;2,1) \oplus (1^3;2,1;2,1)\\
F_2 &= (2,1^2;2,1^2;(3,1\oplus 2,1^2\oplus 2^2)) \oplus (2,1^2;3,1;2,1^2) \\
& \oplus (3,1;2,1^2;2,1^2) \oplus(2^3;2^3;(4,1^2\oplus 3^2))\\
F_3 &= (3,1^2;3,1^2;(3,1^2\oplus 2^2,1)) \oplus (4,1^2;2^3;3,2,1)\oplus(2^3;4,1^2;3,2,1) \\
& \oplus (5,1;2^3;2^3)\oplus(2^3;5,1;2^3)\oplus(3,2^2;3,2^2;(4,2,1\oplus 4,3\oplus 3,2^2\oplus 3^2,1))\\
F_4 &= (5,1^2;5,1^2;3,2^2)\oplus(3,2^2;3,2^2;3,2^2) \oplus(4,2^2;4,2^2;(4,2^2\oplus 4,3,1\oplus 3^2,2))\\
& \oplus(4,2^2;3^2,2;(4,3,1\oplus 3,3,2)) \oplus (3^2,2;4,2^2;(4,3,1\oplus 3^2,2))\oplus (3^2,2;3^2,2;(4,2^2\oplus 4^2) )\\
F_5 &= (5,2^2;5,2^2,3^3)\oplus(5,2^2;4,3,2;4,3,2)\oplus(4,3,2;5,2^2;4,3,2) \\
& \oplus(4,3,2;4,3,2;(4,3,2\oplus 4^2,1\oplus 3^3))\\
F_6 &= (5,3,2;5,3,2;4,3^2)\oplus (5,3,2;4^2,2;4^2,2)\oplus (4^2,2;5,3,2;4^2,2)\oplus (4^2,2;4^2,2;4,3^2)\\
F_7 &= (5,4,2;5,4,2;4,4,3)\\
F_8 &= (5^2,2; 5^2,2; 4^3)
\end{align*}
We can check using Proposition \ref{P:birational} that $q$ is birational. So $F_\bullet$ is a minimal free resolution of the normalization.
We find that it is impossible to identify $d_1$ using the $G$-equivariant property only. For example, the last three summands of $F_1$ can map into both summands of $F_0$.
However, if twisted by $(2;1)$, we get
$$F_0^{(2;1)}=(2;1;0)\xleftarrow{\cdot(1;1;1)} F_1^{(2;1)}=(2,1;1^2;1)\xleftarrow{} \cdots$$
We note that both presentations $d_1:F_1\to F_0$ and $d_1^{(2;1)}:F_1^{(2;1)}\to F_0^{(2;1)}$ are uniform in the sense that the formula does not change if we increase the number of arrows.
By an extensive search, we believe that there exists no line weight such that the twisted complex is {\em pure}.
\end{example}

\section{Applications}

\subsection{Codimension 1 cases}
If $\Rep_{\gha}(K_m)$ has codimension one in $\Rep_\alpha(K_m)$, then it corresponds to an irreducible polynomial $\Delta_{\alpha,m}^\gamma$ in $k[\Rep_\alpha(K_m)]$.
It is clear from the representation-theoretic meaning of $\Rep_\gha(K_m)$ that all such polynomials are semi-invariants of $G$, i.e., $\SL(R_1)\times \SL(R_2)\times \SL(R_{12})$-invariant.

\begin{definition} The polynomial $\Delta_{\alpha,m}^\gamma$ is called the {\em hyper-polynomial of quiver type} $(m,\alpha;\gamma)$.
\end{definition}

\begin{proposition} \label{P:det} If $q:Z\to \Rep_\gha(K_m)$ is birational, then the determinant of the complex $F(\mc{V})_\bullet$ is equal to $(\Delta_{\alpha,m}^\gamma)^{\rank \mc{V}}$.
\end{proposition}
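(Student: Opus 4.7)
The plan is to recognize $\det F(\mc{V})_\bullet$ as a principal divisor on the affine space $\Rep_\alpha(K_m)$, identify the only height-one prime that contributes, and then compute its multiplicity by a local length calculation at the generic point of $\Rep_\gha(K_m)$.

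First, I would confirm that the formal expression
$$\det F(\mc{V})_\bullet := \bigotimes_i \bigl(\det F(\mc{V})_i\bigr)^{\otimes (-1)^i}$$
defines an honest principal fractional ideal in the polynomial ring $A = k[\Rep_\alpha(K_m)]$. Each $F(\mc{V})_i$ is finite free over $A$, and the homology of the complex consists of the sheaves $\mc{R}^i q_*(p^*\mc{V})$, all of which are supported on $\Rep_\gha(K_m)$, a subvariety of codimension one. In particular the total complex has generically zero homology, so $\sum_i(-1)^i\rank_A F(\mc{V})_i = 0$ and the tensor product above lives in $\operatorname{Frac}(A)^\times / A^\times$, i.e.\ defines a Weil divisor on $\Rep_\alpha(K_m)$.

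Second, I would invoke the standard formula expressing the valuation of the determinant at a height-one prime $\mf{p}$ as
$$v_\mf{p}\bigl(\det F(\mc{V})_\bullet\bigr) = \sum_i (-1)^i \operatorname{length}_{A_\mf{p}} H_i(F(\mc{V})_\bullet)_\mf{p}.$$
For an irreducible $f\in A$ coprime to $\Delta_{\alpha,m}^\gamma$, the hypersurface $V(f)$ is not contained in $\Rep_\gha(K_m)$, so every $\mc{R}^iq_*(p^*\mc{V})$ vanishes at the generic point of $V(f)$ and $v_{(f)}(\det F(\mc{V})_\bullet)=0$. Since $A$ is a UFD, this already forces $\det F(\mc{V})_\bullet$ to be a power of $\Delta_{\alpha,m}^\gamma$ up to a nonzero scalar.

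Finally, I would compute the exponent by localizing at the generic point $\eta$ of $\Rep_\gha(K_m)$. Because $q$ is birational and proper, there is a dense open $U\subset \Rep_\gha(K_m)$ containing $\eta$ over which $q$ restricts to an isomorphism; hence $\mc{R}^iq_*(p^*\mc{V})|_U = 0$ for $i>0$, and $q_*(p^*\mc{V})|_U \cong \mc{V}|_U$ under the section that identifies $U$ with $q^{-1}(U)$. Stalking at $\eta$, only the $H_0$ term contributes, and its length over the DVR $A_{(\Delta_{\alpha,m}^\gamma)}$ equals the $\kappa(\eta)$-dimension of the fibre of $\mc{V}$, namely $\rank \mc{V}$. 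Therefore $v_{(\Delta_{\alpha,m}^\gamma)}(\det F(\mc{V})_\bullet) = \rank\mc{V}$ and $\det F(\mc{V})_\bullet = (\Delta_{\alpha,m}^\gamma)^{\rank \mc{V}}$.

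The one step that deserves care is setting up the determinant-of-a-complex formalism (Knudsen--Mumford, or equivalently the MacRae-invariant formula for the alternating-length valuation); everything else is formal, using only that $A$ is a UFD, that the image of $q$ is $\Rep_\gha(K_m)$, and that $q$ is an isomorphism over a dense open of $\Rep_\gha(K_m)$ by birationality and properness.
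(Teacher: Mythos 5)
Your argument is correct and is essentially the same as the paper's: the paper simply defers to Weyman's Proposition 9.1.3, whose underlying proof is exactly this computation of $\operatorname{div}(\det F(\mc{V})_\bullet)$ via the alternating sum of lengths of the homology $\mc{R}^iq_*(p^*\mc{V})$ at height-one primes, with the multiplicity at the generic point of $\Rep_\gha(K_m)$ equal to $\rank\mc{V}$ by birationality of $q$. No substantive difference.
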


\begin{proof} The proof is similar to \cite[Proposition 9.1.3]{W}.
\end{proof}

We refer readers to \cite[Appendix A]{GKZ} for the definition of the determinant of a (based exact) complex.
The most interesting case is when the complex has the Cohen-Macaulay property, i.e., has $F_0$ and $F_1$ only.
In this case, the determinant of the complex becomes the usual determinant.

\begin{example} \label{ex:rigid} A triple $(a_1,a_2,a_3)$ is called quiver-rigid if there is some choice of $i,j,k$ such that $\alpha=(a_i,a_j)$ is a {\em rigid} dimension vector of the $a_k$-arrow Kronecker, which means that $\Rep_\alpha(K_{a_k})$ has a dense orbit for the $\GL_\alpha$-action.
In this case, a necessary and sufficient condition for $\Rep_{\alpha}(K_{a_k})$ having $G$-semi-invariants is that $\alpha$ is a multiple of some real Schur root.
Then there is a unique $G$-semi-invariant, which can be easily constructed using quiver methods \cite{S1}. In this sense, they are not very interesting.

We found that all such triples for $2\leqslant a_1\leqslant a_2\leqslant a_3\leqslant 5$ are
$(2,2,3),(2,2,4),(2,3,4)$ and $(2,4,5)$.
For $K_2, \alpha=(2,3), \gamma=(1,1)$, we have
$$F_0=(0;0;0)\oplus(1^2;1^2;1^2),\ F_1=(2,1;1^3;2,1).$$
For $K_2, \alpha=(3,2), \gamma=(2,1)$ and $K_3, \alpha=(2,2), \gamma=(1,1)$,
their complexes are permutations of $F_\bullet$ on three factors.
If we twist $F_\bullet$ by some weights, we get many determinantal representations of the same hyper-polynomial.
\begin{align*}
&F_0^{(1;0)}=(1;0;0) \xleftarrow{\cdot(2,1;1^3;2,1)} F_1^{(1;0)}=(2^2;1^3;2,1),\\
&F_0^{(0;1)}=(0;1;0) \xleftarrow{\cdot(1^2;1^2;2)} F_1^{(0;1)}=(1^2;1^3;2),\\
&F_0^{(1;1)}=(1;1;0) \xleftarrow{\cdot(1;1;1)} F_1^{(1;1)}=(1^2;1^2;1).
\end{align*}
More generally, for $K_2, \alpha=(n,n+1), \gamma=(1,1)$ we have the degree $n(n+1)$ polynomial
$$F_0^{(1;1)}=(1^{n-1};1;0) \xleftarrow{\cdot(1;1;1)} F_1^{(1;1)}=(1^n;1^2;1).$$
\end{example}

\begin{example} \label{ex:codim1}
In this example, we find all remaining hyper-polynomials of quiver type for $2\leqslant m, \alpha_1, \alpha_2\leqslant 5$ (up to symmetry) using determinantal complexes.
We can easily verify using Proposition \ref{P:birational} that the map $q$ is birational for all cases below.
It is quite surprising that we can find a (non-unique) weight such that the differential is linear.
We give both the untwisted complex and twisted one with linear differential.
\begin{align*}
&\quad K_3, \alpha=(3,4),\gamma=(2,3),\quad \deg(\Delta_{\alpha,m}^\gamma)=24.\\
&F_0=(0;0;0)\oplus (2,1^2;1^4;2,1^2),\ F_1=(2^3;3,1^3;2^3),\\
&F_0^{(2;1)}=(2;1^3;0) \xleftarrow{\cdot(1;1;1)} F_1^{(2;1)}=(2,1;1^4;1). \\
\\
&\quad K_3, \alpha=(5,3),\gamma=(3,2),\quad \deg(\Delta_{\alpha,m}^\gamma)=30.\\
&\ \ (\text{$K_5, \alpha=(3,3),\gamma=(1,2)$ is the same up to symmetry}),\\
&F_{-1}=(1^3;1^3;1^3),\ F_0=(0;0;0)\oplus(1^4;2,1^2;2,1^2),\ F_1=(1^5;3,1^2;3,1^2),\\
&F_0^{(1;1)}=(1^2;1^2;0) \xleftarrow{\cdot(1;1;1)} F_1^{(1;1)}=(1^3;1^3;1).\\
\intertext{We observe that this case can be obtained by applying the reflection functor to the first case. In particular, the property that $\mc{R}^i(q_*\mc{O}_Z)=0, i>0$ is {\em not} preserved under reflection.}
\\
&\quad K_4, \alpha=(4,4),\gamma=(1,2),\quad \deg(\Delta_{\alpha,m}^\gamma)=80.\\
&F_{-1}=(1^4;1^4;2,1^2)\oplus(2,1^2;1^4;1^4),\\ &F_0=(0;0;0)\oplus(2,1^3;2,1^3;2,1^3),\ F_1=(2^4;5,1^3;2^4). \\
&F_0^{(1;2)}=(1^3;2^2;0) \xleftarrow{\cdot(1;1;1)} F_1^{(1;2)}=(1^4;2^2,1;1).\\
\\
&\quad K_5, \alpha=(4,5),\gamma=(1,3),\quad \deg(\Delta_{\alpha,m}^\gamma)=200.\\
& F_{-2}=(1^4;1^4;1^4),\ F_{-1}=(2,1^3;1^5;2,1^3)\oplus(2,1^3;2,1^3;1^5), \\
&F_0=(0;0;0)\oplus (3,1^3;2,1^4;2,1^4),\ F_1=(7,1^3;2^5;2^5),\\
& F_0^{(1;2)}=(1^3;2^3;0)\xleftarrow{\cdot(1;1;1)} F_1^{(1;2)}=(1^4;2^3,1;1).\\
\end{align*}
We note that all four hyper-polynomials except for the second one are not hyperdeterminants defined in \cite{GKZ}.
We can see this simply by degree consideration. The hyperdeterminants for $3\times 3\times 4, 4\times 4\times 4,$ and $4\times 5\times 5$ hypermatrices have degree $48, 272$ and $880$ respectively.
\end{example}

\subsection{Kronecker Coefficients}

From Theorem \ref{T:codim}.(1) and Proposition \ref{P:Fi}, we get an interesting result on vanishing of the Kronecker coefficients.
\begin{proposition} \label{P:KRc}
For $\mu\in P(\gamma_1,\beta_1,t_1,0), \nu\in P(\beta_2,\gamma_2,t_2,0)$ with $|\mu|=|\nu|>\beta_1 t_1+\gamma_2 t_2+\ext_{K_m}(\gamma,\beta)$,
we have that $g_{\mu,\nu}^\lambda$ vanishes if $\lambda_1\leqslant m.$
\end{proposition}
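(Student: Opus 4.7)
The plan is to deduce the statement directly from Theorem~\ref{T:codim}.(1) applied to the untwisted complex $F_\bullet = F_\bullet^{(0;0)}$, whose terms are computed by Proposition~\ref{P:Fi}. Theorem~\ref{T:codim}.(1) says $F_i = 0$ whenever $i > \ext_{K_m}(\gamma,\beta)$, so I only need to exhibit a putatively nonzero Kronecker coefficient $g_{\mu,\nu}^\lambda$ as a contribution to $F_i$ in too-high homological degree.

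First I would specialize Proposition~\ref{P:Fi} to the line weight $\omega=(0;0)$. A triple $(\mu,\nu,\lambda)$ with $\mu \in P(\gamma_1,\beta_1,t_1,0)$, $\nu \in P(\beta_2,\gamma_2,t_2,0)$, and $|\lambda| = |\mu| = |\nu|$ contributes the summand
$$S^{\mu^\circ} R_1 \otimes S^{\nu^\circ} R_2^* \otimes \bigl(g_{\mu,\nu}^\lambda \, S^{\lambda'} R_{12}\bigr) \otimes A(-|\lambda|)$$
to $F_i$, where $i = |\mu| - \beta_1 t_1 - \gamma_2 t_2$. The hypothesis $|\mu| > \beta_1 t_1 + \gamma_2 t_2 + \ext_{K_m}(\gamma,\beta)$ immediately gives $i > \ext_{K_m}(\gamma,\beta)$, so Theorem~\ref{T:codim}.(1) forces this summand to vanish.

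Finally I would translate the vanishing of the summand into $g_{\mu,\nu}^\lambda = 0$. By \eqref{eq:circ1} and \eqref{eq:circ2} the partitions $\mu^\circ$ and $\nu^\circ$ have $\alpha_1$ and $\alpha_2$ parts respectively, so $S^{\mu^\circ} R_1$ and $S^{\nu^\circ} R_2^*$ are nonzero; since $R_{12} = k^m$, the factor $S^{\lambda'} R_{12}$ is nonzero exactly when $\lambda_1 \leqslant m$. The multiplicity of any fixed isotypic component in $F_i$ is a sum of non-negative numbers of the form $g_{\mu',\nu'}^{\lambda'} \cdot \dim S^{\lambda'}(k^m)$, so its vanishing forces each term to vanish individually and rules out cancellation. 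Consequently $g_{\mu,\nu}^\lambda = 0$ whenever $\lambda_1 \leqslant m$.

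There is no real obstacle in this argument; it is essentially bookkeeping that reads off Proposition~\ref{P:Fi} backwards. The only point worth mentioning explicitly is the non-cancellation just described, which is automatic from the non-negativity of Kronecker coefficients and dimensions. (In fact $t_1$ is uniquely determined by $\mu$ via the Borel-Weil-Bott condition of Section~\ref{S:Bott}, so $(\mu,t_1)\mapsto \mu^\circ$ is injective and distinct triples already land in distinct isotypic components.)
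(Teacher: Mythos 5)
Your argument is correct and is exactly the paper's (unwritten) proof: the paper derives Proposition~\ref{P:KRc} precisely by combining Theorem~\ref{T:codim}.(1) with the line-weight $(0;0)$ case of Proposition~\ref{P:Fi} and observing that a direct summand $S^{\mu^\circ}R_1\otimes S^{\nu^\circ}R_2^*\otimes g_{\mu,\nu}^{\lambda}S^{\lambda'}R_{12}$ landing in degree $i=|\mu|-\beta_1t_1-\gamma_2t_2>\ext_{K_m}(\gamma,\beta)$ must vanish, forcing $g_{\mu,\nu}^{\lambda}=0$ whenever $S^{\lambda'}(k^m)\neq 0$, i.e.\ whenever $\lambda_1\leqslant m$. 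Your closing remarks on non-cancellation are harmless but not needed, since $F_i$ is by construction a direct sum and vanishes only if every summand does.
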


This result is sharp in the sense that there are $\mu\in P(\gamma_1,\beta_1,t_1,0), \nu\in P(\beta_2,\gamma_2,t_2,0)$ with $|\mu|=|\nu|=\beta_1 t_1+\gamma_2 t_2+\ext_{K_m}(\gamma,\beta)$ such that $g_{\mu,\nu}^\lambda\neq 0$ for some $\lambda$ with $\lambda_1\leqslant m.$
From Theorem \ref{T:dual}, we obtain a dual version

\begin{proposition} \label{P:KRc_dual}
For $\mu\in P(\gamma_1,\beta_1,t_1,m\beta_2-\alpha_1), \nu\in P(\beta_2,\gamma_2,t_2,m\gamma_1-\alpha_2)$ with $|\mu|=|\nu|<\beta_1 t_1+\gamma_2 t_2-\hom_{K_m}(\gamma,\beta)$,
we have that $g_{\mu,\nu}^\lambda$ vanishes if $\lambda_1\leqslant m.$
\end{proposition}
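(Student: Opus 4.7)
The plan is to mirror the derivation of Proposition \ref{P:KRc}, but applied to the twisted complex $F_\bullet^\omega$ at the line weight $\omega=(m\beta_2-\alpha_1;\, m\gamma_1-\alpha_2)$, using Theorem \ref{T:dual} to transport a vanishing result from the untwisted complex. By construction $\omega^\vee=(0;0)$, so Theorem \ref{T:dual} yields $F^{(0;0)}_\bullet=(F^\omega_\bullet)^*[\innerprod{\gamma,\beta}]$. Written termwise, this is the isomorphism $F^{(0;0)}_i\cong(F^\omega_{-i-\innerprod{\gamma,\beta}})^*$, so a vanishing at large indices on the untwisted side corresponds to a vanishing at small indices on the $\omega$-side.

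Next I would invoke Theorem \ref{T:codim}.(1) for the untwisted complex to conclude $F^{(0;0)}_i=0$ whenever $i>\ext_{K_m}(\gamma,\beta)=-\innerprod{\gamma,\beta}+\hom_{K_m}(\gamma,\beta)$. Through the identification above this forces $F^\omega_j=0$ for every $j<-\hom_{K_m}(\gamma,\beta)$. I then expand $F^\omega_j$ using the line-weight formula in Proposition \ref{P:Fi}: each summand is indexed by $\mu\in P(\gamma_1,\beta_1,t_1,m\beta_2-\alpha_1)$ and $\nu\in P(\beta_2,\gamma_2,t_2,m\gamma_1-\alpha_2)$ with $|\lambda|=|\mu|=|\nu|=j+\beta_1 t_1+\gamma_2 t_2$, and carries the factor $g_{\mu,\nu}^\lambda\, S^{\lambda'}R_{12}$. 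For the summand to vanish, either $g_{\mu,\nu}^\lambda=0$ or $S^{\lambda'}R_{12}=0$, and the latter happens precisely when $\lambda_1>m$. Hence for $\lambda_1\leqslant m$ the Kronecker coefficient $g_{\mu,\nu}^\lambda$ must vanish; rewriting the condition $j<-\hom_{K_m}(\gamma,\beta)$ as $|\mu|=|\nu|<\beta_1 t_1+\gamma_2 t_2-\hom_{K_m}(\gamma,\beta)$ recovers the hypothesis of the proposition.

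The only delicate point is sign and shift bookkeeping: under the standing assumption \eqref{eq:homext} the Euler form $\innerprod{\gamma,\beta}$ is negative, and dualizing a homological complex reverses the index sign before the shift by $\innerprod{\gamma,\beta}$ is applied, so one must check that the inequalities come out in the right direction. Once this is set up correctly, the same representation-theoretic dichotomy that drives Proposition \ref{P:KRc}—vanishing of $S^{\lambda'}R_{12}$ versus vanishing of $g_{\mu,\nu}^\lambda$—does the rest of the work with no new ingredients.
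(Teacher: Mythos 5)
Your proposal is correct and is essentially the paper's own argument: the paper obtains Proposition \ref{P:KRc_dual} by applying Theorem \ref{T:dual} to Proposition \ref{P:KRc}, which is exactly your combination of the duality $F^{(0;0)}_\bullet=(F^\omega_\bullet)^*[\innerprod{\gamma,\beta}]$ for $\omega=(m\beta_2-\alpha_1;m\gamma_1-\alpha_2)$ with Theorem \ref{T:codim}.(1) and the term-by-term description in Proposition \ref{P:Fi}. Your index bookkeeping ($F^\omega_j=0$ for $j<-\hom_{K_m}(\gamma,\beta)$) and the dichotomy between $g_{\mu,\nu}^\lambda=0$ and $S^{\lambda'}R_{12}=0$ for $\lambda_1>m$ both check out.
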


\begin{proof}[Proof of Theorem \ref{T:intro2}] We observe from the proof of Theorem \ref{T:codim}.(1) that the statement actually holds for the twisted complex $F_\bullet(\mc{V})$, where $\mc{V}$ is any ample line bundle on $\Gr\sm{\alpha\\ \gamma}$. This is because the Grauert-Riemenschneider vanishing theorem \cite[Theorem 1.2.28]{W} holds for the canonical sheaf tensoring with any ample line bundle. An ample line bundle corresponds to negative $w_1$ and $w_2$. So the above two propositions generalize to Theorem \ref{T:intro2}.
\end{proof}

If $\alpha=(n,nm), \gamma=(n,\gamma_2)$ or $\alpha=(nm,n),\gamma=(\gamma_1,0)$, then $\Rep_{\gha}(K_m)$ is the usual determinantal variety of rank $\gamma_2$ maps from $k^{mn}$ to itself. In particular, it is Gorenstein \cite[Corollary 6.1.5]{W}. We conjecture that this is actually an ``if and only if" statement for $\Rep_{\gha}(K_m)$ being Gorenstein.

\begin{proposition} $g_{lm^n,mn^l}^{m^{nl}}=1$ for any $l,m,n\in\mathbb{N}$.
\end{proposition}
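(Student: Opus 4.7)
The plan is to give a direct verification using Schur--Weyl duality \eqref{eq:KC} applied to vector spaces of minimal admissible dimensions, without invoking the paper's geometric machinery. I take $V = k^n$ and $W = k^l$, so that $\dim(V \otimes W) = nl$. Since $\lambda = (m^{nl})$ has exactly $nl$ parts, the Schur functor satisfies $S^\lambda(V \otimes W) = \det(V \otimes W)^{\otimes m}$, a one-dimensional $\GL(V) \times \GL(W)$-representation. Likewise $S^\mu(V) = \det(V)^{\otimes lm}$ for $\mu = ((lm)^n)$, and $S^\nu(W) = \det(W)^{\otimes mn}$ for $\nu = ((mn)^l)$, because these rectangles have exactly as many rows as the corresponding space has dimensions.

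The key identification is the standard $\GL(V) \times \GL(W)$-isomorphism $\det(V \otimes W) \cong \det(V)^{\otimes l} \otimes \det(W)^{\otimes n}$, which just records that $\det(g\otimes h) = \det(g)^{\dim W}\det(h)^{\dim V}$. Raising to the $m$-th power gives
\[
S^\lambda(V \otimes W) \;=\; \det(V)^{\otimes lm} \otimes \det(W)^{\otimes mn} \;=\; S^\mu(V) \otimes S^\nu(W),
\]
matching precisely one summand on the right-hand side of \eqref{eq:KC}.

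Now substitute $(V, W) = (k^n, k^l)$ into \eqref{eq:KC}. The right-hand side becomes a sum of distinct irreducibles $S^{\mu'}(V) \otimes S^{\nu'}(W)$ over pairs $(\mu', \nu')$ with $\ell(\mu') \leqslant n$ and $\ell(\nu') \leqslant l$, each occurring with multiplicity $g_{\mu',\nu'}^\lambda$. Since the left-hand side is one-dimensional and has already been identified with the specific irreducible $S^\mu(V) \otimes S^\nu(W)$, exactly one term may survive on the right, with coefficient $1$. Therefore $g_{\mu,\nu}^\lambda = 1$.

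There is no serious obstacle; the proof is essentially a dimension count. The only points to verify carefully are that the chosen $V$ and $W$ are large enough to detect $(\mu,\nu)$ (which holds because $\ell(\mu) = n$ and $\ell(\nu) = l$) and that the one-dimensionality of both sides forces uniqueness of the surviving summand. One could alternatively try to recover this proposition from the paper's codimension-one machinery by identifying a suitable rigid relative tensor invariant of weight $(\mu;\nu;\lambda')$, but the direct Schur--Weyl argument above is self-contained and optimal.
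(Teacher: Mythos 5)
Your argument is correct, and it is genuinely different from the one in the paper. You prove the identity purely representation-theoretically: specializing \eqref{eq:KC} to $V=k^n$, $W=k^l$ makes both $S^{\lambda}(V\otimes W)$ and the single candidate summand $S^{\mu}(V)\otimes S^{\nu}(W)$ one-dimensional (full-height rectangles give powers of determinants), and the standard isomorphism $\det(V\otimes W)\cong\det(V)^{\otimes \dim W}\otimes\det(W)^{\otimes \dim V}$ pins down which summand survives; the dimension count then forces the multiplicity to be exactly $1$. The paper instead takes $\alpha=(n,nm)$, $\gamma=(n,\gamma_2)$ with $l=nm-\gamma_2$, so that $\Rep_{\gha}(K_m)$ is a classical determinantal variety and hence Gorenstein; the Gorenstein property forces the last term $F_{l^2}$ of the Lascoux resolution to have rank one, and Proposition \ref{P:Fi} then translates this rank-one condition into the statement that the unique admissible triple $(\lambda,\mu,\nu)=(m^{nl},(lm)^n,(mn)^l)$ has $g_{\mu,\nu}^{\lambda}=1$. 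Your route is more elementary and self-contained (it does not rely on the geometric machinery or on the Gorenstein property of determinantal varieties), whereas the paper's derivation illustrates how its resolution encodes Kronecker coefficients and fits the proposition into the surrounding discussion of when $\Rep_{\gha}(K_m)$ is Gorenstein. Both proofs are complete; the only point worth making explicit in yours is that distinct pairs $(\mu',\nu')$ with $\ell(\mu')\leqslant n$, $\ell(\nu')\leqslant l$ yield pairwise non-isomorphic irreducible $\GL(V)\times\GL(W)$-modules, which is what lets the one-dimensionality of the left-hand side kill every other term.
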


\begin{proof} Let $\alpha=(n,nm), \gamma=(n,\gamma_2)$ with $l:=nm-\gamma_2>0$, then $\innerprod{\gamma,\beta}=\ext_{K_m}(\gamma,\beta)=l^2>0$.
According to the above remark, the last term $F_{l^2}$ has rank $1$, so by Proposition \ref{P:Fi} that there is only one solution for $t_1,t_2,\lambda,\mu,\nu$ with
\begin{equation} \label{eq:t1t2} 0\leqslant t_1\leqslant n, 0\leqslant t_2\leqslant l,\mu\in P(0,t_1),\nu\in P(\gamma_2,t_2)
\end{equation}
such that $l^2+0t_1+\gamma_2t_2=|\lambda|=|\mu|=|\nu|$
with $\lambda',\mu^\circ,\nu^\circ$ having exactly $m,n,nm$ equal parts, and $g_{\mu,\nu}^\lambda=1$.

We claim that $t_2=l,\lambda=m^{nl},\mu=lm^n,\nu={mn}^l$ is the only solution. It is clear from \eqref{eq:t1t2} that $t_2=l$, and thus $|\lambda|=lmn, \lambda'=nl^m,\mu^\circ=lm^n,\nu^\circ=l^{mn}.$
Applying the inverse of Bott's algorithm, we get $\mu=lm^n, \nu=mn^l$.
\end{proof}

\section{Generalization to the tensor setting} \label{S:generalization}
We have a straight-forward generalization from the quiver setting to the tensor setting.
We viewed the representation space of the Kronecker quiver as the triple tensor $R_1^*\otimes R_2\otimes R_{12}^*$.
We may just consider the tensor $R_\alpha:=R_1^*\otimes R_2^*\otimes R_3^*$.
We call $\alpha=(\dim R_1,\dim R_2,\dim R_3)$ the dimension vector of the tensor.
Now we consider the product of Grassmannians $\Gr\sm{\alpha\\ \gamma}:=\prod_{i=1}^3 \Gr\sm{\alpha_i\\ \gamma_i}$.
We replace the vector bundle $\mc{E}$ in Section \ref{S:Kron} by
$\mc{S}_1^*\otimes \mc{S}_2^*\otimes \mc{S}_3^*$, where each $\mc{S}_i$ is the (pullback) of the universal subbundle of $\Gr\sm{\alpha_i\\ \gamma_i}$.
We have an induced vector bundle epimorphism $\Gr\sm{\alpha\\ \gamma}\times R_\alpha \twoheadrightarrow \mc{E}$.
Let $\mc{Z}$ be the kernel of the vector bundle epimorphism, and $Z$ be its total space.
We denote by $q$ the projection $Z\to R_\alpha$, and set $R_\gca:=q(Z)$ be the scheme-theoretical image.
Since $Z$ is integral and $q$ is projective, $R_\gca$ is integral and closed.
From now on, we use $q$ to denote the projection $Z\to R_\gca$.
Let $\beta=\alpha-\gamma$, and $\op{h}(\gamma,\beta)$ be the dimension of generic fibre of $q$ and $\op{e}(\gamma,\beta)$ be the codimension of $R_\gca$ in $R_\alpha$, then $$\Innerprod{\gamma,\beta}:=\op{h}(\gamma,\beta)-\op{e}(\gamma,\beta)=\innerprod{\gamma,\beta}_0-\gamma_1\gamma_2\gamma_3.$$
Unfortunately, we do not have an algorithm to compute $\op{e}(\gamma,\beta)$. We also do not have a criterion for the birationality of $q$.

We consider the complex $F_\bullet$ as we did in the quiver setting.
We have analogues of Theorem \ref{T:main}, \ref{T:codim}, and \ref{T:dual}.
More generally, we can twist $F_\bullet$ by a vector bundle $\mc{V}$.
For fixed {\em weight} $\omega=(\omega_1;\omega_2;\omega_3)\in \prod_{i=1}^3\mathbb{Z}^{\beta_i}$, we put the vector bundle $\mc{V}:=\bigotimes_{i=1}^3 S^{\omega_i}\mc{Q}_i$.
The $i$th term of the twisted complex $F_\bullet^\omega$ is
$$F_i^\omega =\bigoplus_{j\geqslant 0} H^j(\Gr\sm{\alpha\\ \gamma}; \bigwedge^{i+j} \mc{E}^* \otimes \bigotimes_{i=1}^3 S^{\omega_i}\mc{Q}_i)\otimes A(-i-j).$$
If $\omega=(w_1^{\beta_1};w_2^{\beta_2};w_3^{\beta_3})$ for $w_i\in\mathbb{Z}$, then $\mc{V}$ is a line bundle .
We simply write $(w_1;w_2;w_3)$ for $\omega$. The proof of the following proposition is almost the same as Proposition \ref{P:Fi}.

\begin{proposition} \label{P:TFi} The $i$th term of the complex $F^\omega_\bullet$ is given by
\begin{align*}\bigoplus_{\substack{|\lambda_k|=i+\sum_k \ell(\sigma_k)}} g_{\lambda_2,\lambda_3}^{\lambda_1'} \big( \bigotimes_{k=1,2,3} S^{\sigma_k(\omega_k,\lambda_k)}R_k \big) \otimes A(-|\lambda|).
\end{align*}
In particular, if $\omega=(w_1;w_2;w_3)$ is a line weight, then $F^\omega_i$ is given by
\begin{align*}\bigoplus_{\substack{0\leqslant t_i\leqslant \gamma_i}}
\bigoplus_{\substack{\lambda_k\in P(\gamma_i,\beta_i,t_i,w_i)\\ |\lambda_k|=i+\sum \beta_i t_i}}
g_{\lambda_2,\lambda_3}^{\lambda_1'} \big( \bigotimes_{k=1,2,3}  S^{\lambda_k^\circ}R_k\big)  \otimes A(-|\lambda_1|),
\end{align*}
where
$$\lambda_i^\circ:= \sigma(t_i)\circ (w^{\beta_i},\lambda)=\big( (\lambda_{i})_1-\beta_i,\dots,(\lambda_{i})_{t_i}-\beta_i,(t_i+w_i)^{\beta_i},(\lambda_{i})_{t_i+1},\dots,(\lambda_{i})_{\gamma_i} \big).$$
Assume that $F_\bullet^\omega$ has no negative degree terms. Then the maximal minors of $d_1:F_1^\omega\to F_0^\omega$ defines $R_\gca$ set-theoretically, and the annihilator of the module $\Coker d_1$ defines $R_\gca$ scheme-theoretically.
\end{proposition}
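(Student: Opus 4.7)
The plan is to follow the proof of Proposition~\ref{P:Fi} \emph{mutatis mutandis}: the only substantive change is that the trivial factor $R_{12}$ (which in the quiver case pulls out of cohomology as a vector space) is now replaced by a genuine third universal subbundle $\mc{S}_3$, so an extra application of the Schur/Kronecker decomposition is needed when splitting $\bigwedge^{n}\mc{E}^{*}$, and the final K\"{u}nneth/Bott step involves three Grassmannian factors rather than two.

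First I would decompose $\bigwedge^{n}\mc{E}^{*}=\bigwedge^{n}(\mc{S}_1\otimes\mc{S}_2\otimes\mc{S}_3)$ by grouping the last two factors. Cauchy's formula gives
$$\bigwedge^{n}\mc{E}^{*}=\bigoplus_{|\lambda_1|=n} S^{\lambda_1}\mc{S}_1\otimes S^{\lambda_1'}(\mc{S}_2\otimes\mc{S}_3),$$
and then \eqref{eq:KC} applied to $S^{\lambda_1'}(\mc{S}_2\otimes\mc{S}_3)$ yields
$$\bigwedge^{n}\mc{E}^{*}=\bigoplus_{|\lambda_1|=|\lambda_2|=|\lambda_3|=n} g_{\lambda_2,\lambda_3}^{\lambda_1'}\,S^{\lambda_1}\mc{S}_1\otimes S^{\lambda_2}\mc{S}_2\otimes S^{\lambda_3}\mc{S}_3.$$
Now I would twist by $\mc{V}=\bigotimes_{k}S^{\omega_k}\mc{Q}_k$. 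Because each factor $S^{\lambda_k}\mc{S}_k\otimes S^{\omega_k}\mc{Q}_k$ is pulled back from a distinct Grassmannian factor of $\Gr\sm{\alpha\\ \gamma}$, the K\"{u}nneth formula splits $H^{j}$ into a direct sum over triples $(j_1,j_2,j_3)$ with $j_1+j_2+j_3=j$ of tensor products of the cohomologies on each factor. By Corollary~\ref{C:Bott}, each factor has at most one nonvanishing cohomology, concentrated in degree $j_k=\ell(\sigma_k)$ with value $S^{\sigma_k(\omega_k,\lambda_k)}R_k$. Collecting these contributions with $i+j=n$ and relabelling $n=i+\sum_k\ell(\sigma_k)$ gives the first displayed formula. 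The specialization to line weights is then immediate from the explicit form of $\sigma(t_k)$ in Corollary~\ref{C:Bott}, exactly as in the proof of Proposition~\ref{P:Fi}.

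For the two statements about defining equations, the formal argument is identical to the one at the end of Proposition~\ref{P:Fi}: since $F_\bullet^\omega$ is a (minimal) complex of free $A$-modules resolving $\Coker d_1$ and this cokernel is set-theoretically supported on $R_\gca$, \cite[Proposition 20.7]{E} identifies the ideal of maximal minors of $d_1$ as a set-theoretic defining ideal of $R_\gca$, while the annihilator of $\Coker d_1$ cuts out the scheme-theoretic support. I do not anticipate a genuine obstacle; the mildest wrinkle is a bookkeeping check that different groupings at the Cauchy step (e.g.\ $(\mc{S}_1\otimes\mc{S}_2)\otimes\mc{S}_3$ versus $\mc{S}_1\otimes(\mc{S}_2\otimes\mc{S}_3)$) produce the same decomposition. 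This amounts to standard symmetries of the Kronecker coefficients and is automatic from the fact that the left-hand side $\bigwedge^{n}\mc{E}^{*}$ is intrinsically defined.
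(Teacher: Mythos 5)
Your proposal is correct and follows exactly the route the paper intends: the paper gives no separate argument for Proposition \ref{P:TFi} beyond the remark that ``the proof is almost the same as Proposition \ref{P:Fi},'' and your adaptation (Cauchy on $\mc{S}_1\otimes(\mc{S}_2\otimes\mc{S}_3)$, then \eqref{eq:KC}, then K\"{u}nneth and Borel--Weil--Bott on each of the three factors, with the minors/annihilator statements again from \cite[Proposition 20.7]{E}) is precisely that adaptation. The closing observation that the grouping at the Cauchy step is immaterial by symmetry of the Kronecker coefficients is a correct, if unneeded, sanity check.
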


%

Now for each weight $\omega=(\omega_1;\omega_2;\omega_3)$, we consider the dual weight $$\omega^\vee=(\gamma_2\gamma_3-\alpha_1-\omega_1;\gamma_1\gamma_3-\alpha_2-\omega_2,\gamma_1\gamma_2-\alpha_3-\omega_3).$$
It is easy to verify that it corresponds to the dual bundle of Theorem \ref{T:dual}, so
$$F^{\omega^\vee}_\bullet=(F^{\omega}_\bullet)^*[\Innerprod{\gamma,\beta}].$$

Analogous to theorem \ref{T:intro2}, we get a vanishing condition for the Kronecker coefficients from the above proposition and Theorem \ref{T:codim}.(1).
Since $g_{\mu,\nu}^\lambda$ is in fact invariant under any permutation of $\lambda,\mu,\nu$, we will write $g_{\lambda,\mu,\nu}$ instead of $g_{\mu,\nu}^\lambda$.
\begin{corollary}  \label{C:Kron}  Let $w_i$ be non-positive numbers. \begin{enumerate}
\item For $\lambda_i\in P(\gamma_i,\beta_i,t_i,w_i)$ with $|\lambda_i|>\sum_i \beta_i t_i + \op{e}(\gamma,\beta)$,
we have that $g_{\lambda_1,\lambda_2,\lambda_3}$ vanishes if $(\lambda_i)_1\leqslant \gamma_i$ for some $i$. \\
\item For $\lambda_i\in P(\gamma_i,\beta_i,t_i,\gamma_j\gamma_k-\alpha_i-w_i)$ with $|\lambda_i|<\sum_i\beta_i t_i-\op{h}(\gamma,\beta)$,
we have that $g_{\lambda_1,\lambda_2,\lambda_3}$ vanishes if $(\lambda_i)_1\leqslant \gamma_i$ for some $i$.
\end{enumerate}
\end{corollary}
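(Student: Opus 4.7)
The plan is to emulate the proof of Theorem \ref{T:intro2}, now using Proposition \ref{P:TFi} in place of Proposition \ref{P:Fi} together with the tensor analogue of Theorem \ref{T:codim}(1) asserted just before Proposition \ref{P:TFi}.

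First I would upgrade the length statement $\op{e}(\gamma,\beta) = \max\{i \mid F_i \neq 0\}$ from the untwisted complex $F_\bullet$ to the twisted complex $F_\bullet(\mathcal{V})$, for $\mathcal{V}$ any ample line bundle on $\Gr\sm{\alpha\\\gamma}$. As in the proof of Theorem \ref{T:intro2}, this rests on the Grauert--Riemenschneider vanishing theorem applied to $\omega_Z \otimes p^*\mathcal{V}$: the higher direct images of this ample twist of the canonical sheaf vanish, and the argument underlying Theorem \ref{T:codim}(1) then goes through. A line weight $\omega = (w_1; w_2; w_3)$ with all $w_i \leqslant 0$ corresponds to an ample line bundle, under the same sign convention used in Theorem \ref{T:intro2}.

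For part (1), the upshot is $F_i^\omega = 0$ whenever $i > \op{e}(\gamma,\beta)$, so by Proposition \ref{P:TFi} every summand
$$g_{\lambda_2,\lambda_3}^{\lambda_1'}\Big(\bigotimes_{k=1,2,3} S^{\lambda_k^\circ}R_k\Big)\otimes A(-|\lambda_1|)$$
of $F_i^\omega$ must vanish in that range. Corollary \ref{C:Bott} shows $S^{\lambda_k^\circ}R_k$ is nonzero exactly when $\lambda_k^\circ$ is a legitimate partition of length $\leqslant \alpha_k$, which happens automatically once $\lambda_k \in P(\gamma_k,\beta_k,t_k,w_k)$ with $t_k + w_k \geqslant 0$; thus the vanishing must come from $g_{\lambda_2,\lambda_3}^{\lambda_1'} = 0$, i.e.\ $g_{\lambda_1',\lambda_2,\lambda_3} = 0$. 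Because the bundle $\mathcal{E} = \mathcal{S}_1^* \otimes \mathcal{S}_2^* \otimes \mathcal{S}_3^*$ and the whole setup are symmetric in the three tensor slots, Proposition \ref{P:TFi} admits two other versions in which position $2$ or position $3$ plays the role of the conjugated slot. The hypothesis $(\lambda_i)_1 \leqslant \gamma_i$ for \emph{some} $i$ is precisely what guarantees that $\lambda_i'$ has at most $\gamma_i$ parts, so it can enter the $P(\gamma_i,\beta_i,t_i',w_i)$ condition in the $i$-th version of Proposition \ref{P:TFi}; this repositioning converts the Kronecker coefficient in the complex into the fully symmetric $g_{\lambda_1,\lambda_2,\lambda_3}$, with size bound $|\lambda_i| > \sum_j \beta_j t_j + \op{e}(\gamma,\beta)$ inherited from the shift.

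Part (2) then falls out of the duality identity $F^{\omega^\vee}_\bullet = (F^{\omega}_\bullet)^*[\Innerprod{\gamma,\beta}]$ proved just before the corollary. Running the part (1) argument with $\omega^\vee = (\gamma_2\gamma_3 - \alpha_1 - w_1;\gamma_1\gamma_3 - \alpha_2 - w_2;\gamma_1\gamma_2 - \alpha_3 - w_3)$ in place of $\omega$ and chasing the homological shift by $\Innerprod{\gamma,\beta} = \op{h}(\gamma,\beta) - \op{e}(\gamma,\beta)$ through the formula converts a high-degree vanishing for $F^{\omega^\vee}_\bullet$ into the stated low-degree bound $|\lambda_i| < \sum_j \beta_j t_j - \op{h}(\gamma,\beta)$ for $F^\omega_\bullet$, with each $w_i$ replaced by $\gamma_j\gamma_k - \alpha_i - w_i$.

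The main technical nuisance, just as in Theorem \ref{T:intro2}, will be checking that the three Schur functors $S^{\lambda_k^\circ}R_k$ really are nonzero on the full range of parameters covered by the hypothesis, so that the entire burden of vanishing genuinely falls on the Kronecker coefficient, together with the careful tracking of how the auxiliary indices $t_i$ match up when we switch which slot plays the distinguished role under the $S_3$-symmetry of the complex.
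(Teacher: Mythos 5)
Your proposal follows the paper's route exactly: the paper's entire ``proof'' is the one-line remark that the corollary follows from Proposition \ref{P:TFi} together with the tensor analogue of Theorem \ref{T:codim}(1), upgraded to ample twists exactly as in the proof of Theorem \ref{T:intro2} via Grauert--Riemenschneider, with part (2) obtained from the duality $F^{\omega^\vee}_\bullet=(F^{\omega}_\bullet)^*[\Innerprod{\gamma,\beta}]$. One caveat on your last step: permuting which slot is conjugated in the iterated Cauchy decomposition always returns the \emph{same} coefficient, $g_{\lambda_2,\lambda_3}^{\lambda_1'}=g_{\lambda_1,\lambda_2',\lambda_3}=g_{\lambda_1,\lambda_2,\lambda_3'}$ (one conjugation is always present), so the $S_3$-repositioning does not by itself convert this into the unconjugated $g_{\lambda_1,\lambda_2,\lambda_3}$ with all three $\lambda_i\in P(\gamma_i,\beta_i,t_i,w_i)$; this gap, however, is present in the paper's own statement and is not addressed by its one-sentence justification either, so your write-up is no less complete than the original.
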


The proof of the following lemma is similar to that of Lemma \ref{L:nonnegative}, so we leave it for readers.
\begin{lemma} \label{L:Tnonnegative} {\ }\begin{enumerate}
\item $F_\bullet^\omega$ has no term in negative degree if the line weight $\omega=(w_1;w_2;w_3)$ satisfies
$\sum_{i=1}^3(2\beta_i-w_i)^2<12$, or $\sum_{i=1}^3(2\beta_i-w_i)^2=12$ with any of the following
$(1).\ \beta_i$'s are not all equal; $(2).\ w_i$'s are not all equal; $(3).\ \sum_{j\neq i} w_j>\alpha_i-3$ for some $i$.\\
\item If $\omega^\vee$ satisfies the above condition,
then $\max\{i\mid F_i^\omega\neq 0\}=-\Innerprod{\gamma,\beta}$.
\end{enumerate}
\end{lemma}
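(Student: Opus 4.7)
The plan is to mimic the proof of Lemma~\ref{L:nonnegative} in the tensor setting. By Proposition~\ref{P:TFi}, a nonzero summand of $F_i^\omega$ arises from a triple $(t_1,t_2,t_3)$ with $0\leq t_k\leq\gamma_k$ and partitions $\lambda_k\in P(\gamma_k,\beta_k,t_k,w_k)$ of common size $i+\sum_j\beta_j t_j$ with $g_{\lambda_2,\lambda_3}^{\lambda_1'}\neq 0$. The inequality $(\lambda_k)_{t_k}\geq\beta_k+t_k+w_k$ in the definition of $P$ already yields $|\lambda_k|\geq t_k(\beta_k+t_k+w_k)$. Setting $i=-1$ and summing the three resulting inequalities $-1+\sum_j\beta_j t_j\geq t_k(\beta_k+t_k+w_k)$ over $k$, I obtain the single necessary condition
\[ g(t_1,t_2,t_3):=\sum_k t_k^2+\sum_k(w_k-2\beta_k)t_k+3\leq 0.\]
Completing the square gives $g=\sum_k\bigl(t_k-(2\beta_k-w_k)/2\bigr)^2+3-\tfrac14\sum_k(2\beta_k-w_k)^2$. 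If $\sum_k(2\beta_k-w_k)^2<12$ the global minimum is strictly positive, so $F_{-1}^\omega=0$; the same calculation with the constant $3$ replaced by $3n$ shows $F_{-n}^\omega=0$ for all $n\geq 1$.

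In the boundary case $\sum_k(2\beta_k-w_k)^2=12$, the only decomposition into three integer squares is $4+4+4$, so $|2\beta_k-w_k|=2$ for each $k$. The critical point $t_k=(2\beta_k-w_k)/2\in\{\pm 1\}$ is feasible only when $w_k=2\beta_k-2$, giving $t_k=1$; otherwise $g>0$ on the feasible box and we are done. In the dangerous subcase $w_k=2\beta_k-2$ for all $k$, the necessary equality $|\lambda_k|=3\beta_k-1$ combined with $(\lambda_k)_1\geq 3\beta_k-1$ forces $\lambda_k=(3\beta_k-1)$, a single row. The common-size constraint then forces $\beta_1=\beta_2=\beta_3$ and, in turn, $w_1=w_2=w_3$; either condition (1) or condition (2) obstructs this. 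For condition (3), apply the Littlewood--Murnaghan inequality, using the three-partition symmetry $g_{\lambda_2,\lambda_3}^{\lambda_1'}=g_{\lambda_1',\lambda_2,\lambda_3}$: for any index $i$,
\[ (\lambda_i')_1\geq (\lambda_j)_1+(\lambda_k)_1-|\lambda_j|\geq 3+w_j+w_k-\beta_i,\]
while $(\lambda_i')_1$ is bounded above by the number of parts of $\lambda_i$, which is at most $\gamma_i$. Hence $g_{\lambda_2,\lambda_3}^{\lambda_1'}\neq 0$ would require $w_j+w_k\leq\alpha_i-3$ for every $i$, contradicting condition~(3).

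Part~(2) is formal once part~(1) is established. Applying part~(1) to $\omega^\vee$ and invoking the duality identity $F^{\omega^\vee}_\bullet=(F^\omega_\bullet)^*[\Innerprod{\gamma,\beta}]$ recorded immediately before the lemma, one concludes $F^\omega_i=0$ for $i>-\Innerprod{\gamma,\beta}$. The reverse inequality follows from the tensor analogue of Theorem~\ref{T:codim}.(1), which identifies the top nonzero degree of the complex with $\op{e}(\gamma,\beta)=-\Innerprod{\gamma,\beta}$.

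The main obstacle is the Littlewood--Murnaghan step in the boundary case: one must correctly exploit the three-partition symmetry of the Kronecker coefficient and choose which of $\lambda_1',\lambda_2,\lambda_3$ to bound. A related subtlety is that, once $\lambda_k=(3\beta-1)$ is forced to be a single row, conditions (1) and (2) by themselves already prevent a nonzero Kronecker coefficient in the dangerous subcase, so condition (3) plays the role of a convenient numerical shortcut parallel to the Kronecker quiver setting rather than an essential combinatorial obstruction.
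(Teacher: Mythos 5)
Your argument is correct and follows exactly the route the paper intends: the paper defers the proof of Lemma \ref{L:Tnonnegative} to the reader as ``similar to that of Lemma \ref{L:nonnegative}'', and your adaptation --- summing the three size inequalities to obtain the quadratic form $g$ with minimum $3-\tfrac14\sum_k(2\beta_k-w_k)^2$, reducing the boundary case $\sum_k(2\beta_k-w_k)^2=12$ to $t=(1,1,1)$ with each $\lambda_k$ a single row of size $3\beta_k-1$, and then using Littlewood--Murnaghan together with the duality $F^{\omega^\vee}_\bullet=(F^{\omega}_\bullet)^*[\Innerprod{\gamma,\beta}]$ --- is the correct tensor analogue. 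Two cosmetic remarks: for $i=2,3$ the Littlewood--Murnaghan step needs the transpose symmetry $g_{\lambda,\mu,\nu}=g_{\lambda',\mu',\nu}$, not just permutation symmetry; and once all three $\lambda_k$ are forced to be equal single rows of size $n=3\beta-1\geqslant 2$, the coefficient $g_{(n),(n)}^{(1^n)}$ vanishes outright, so in the tensor setting the boundary conditions (1)--(3) are sufficient but not in fact necessary for the conclusion.
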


%

\begin{proposition} \label{P:TMCM} Assume that $\op{h}(\gamma,\beta)=0$. If a line weight $\omega$ and its dual satisfy the conditions in Lemma \ref{L:Tnonnegative}, then the complex $F_\bullet^\omega$ resolves a maximal Cohen-Macaulay module supported on $R_\gca$.
\end{proposition}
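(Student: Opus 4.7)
The plan is to combine the two parts of Lemma \ref{L:Tnonnegative} with the Auslander--Buchsbaum formula over the polynomial ring $A=k[R_\alpha]$. First, since the line weight $\omega$ satisfies the condition in Lemma \ref{L:Tnonnegative}.(1), the complex $F_\bullet^\omega$ has no negative degree terms. By the tensor-setting analogue of Theorem \ref{T:main} (which the paper has already asserted right after the setup: the Koszul resolution of $\mc{O}_Z$ on $GR$, the properness of $\pi$, and the Kempf--Weyman machinery all transfer verbatim), the minimal complex $F_\bullet^\omega$ is then a free $A$-resolution of $M_{\gamma,\alpha}^\omega:=\Coker d_1$, which by construction is supported on $\pi(Z)=R_\gca$.

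Next, applying Lemma \ref{L:Tnonnegative}.(2) to $\omega$ (which uses the hypothesis on $\omega^\vee$), I obtain $\max\{i\mid F_i^\omega\neq 0\}=-\Innerprod{\gamma,\beta}$. The assumption $\op{h}(\gamma,\beta)=0$ gives $\op{e}(\gamma,\beta)=-\Innerprod{\gamma,\beta}$, so the length of $F_\bullet^\omega$ is exactly the codimension of $R_\gca$ in $R_\alpha$. Since the resolution is minimal, this length also equals the projective dimension $\op{pd}_A M_{\gamma,\alpha}^\omega$.

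The final step is Auslander--Buchsbaum: over the regular ring $A$ one has
\[\op{depth}_A M_{\gamma,\alpha}^\omega=\dim A-\op{pd}_A M_{\gamma,\alpha}^\omega=\dim R_\alpha-\op{e}(\gamma,\beta)=\dim R_\gca.\]
Since $M_{\gamma,\alpha}^\omega$ is supported on the irreducible variety $R_\gca$, its depth cannot exceed $\dim R_\gca$, so equality forces $M_{\gamma,\alpha}^\omega$ to be maximal Cohen--Macaulay on $R_\gca$. The only delicate point is the identification of the length of $F_\bullet^\omega$ through duality, which is precisely the content of Lemma \ref{L:Tnonnegative}.(2) together with the duality formula $F^{\omega^\vee}_\bullet=(F^{\omega}_\bullet)^*[\Innerprod{\gamma,\beta}]$ recorded just before the lemma; everything else is a direct transfer of the quiver-setting arguments from Section \ref{S:Kron}.
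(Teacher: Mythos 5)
Your proposal is correct and follows exactly the route the paper intends: the paper states this proposition without proof as the immediate consequence of Lemma \ref{L:Tnonnegative} (part (1) giving nonnegativity, part (2) via duality giving length $=-\Innerprod{\gamma,\beta}=\op{e}(\gamma,\beta)$ once $\op{h}(\gamma,\beta)=0$), with the Auslander--Buchsbaum step you spell out being implicit in the paper's definition of a Cohen--Macaulay weight. The only cosmetic slip is writing $\pi(Z)$ where you mean $q(Z)=R_\gca$.
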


If $R_{\gca}$ has codimension one in $R_\alpha$, then it corresponds to an irreducible polynomial $\Delta_{\alpha}^\gamma$ in $k[R_\alpha]$.
Since $q(Z)$ is $G$-stable, all such polynomials are semi-invariants of $G$.

\begin{definition} The polynomial $\Delta_{\alpha}^\gamma$ is called the hyper-polynomial of type $(\alpha;\gamma)$.
\end{definition}

\begin{proposition} If $q$ is birational, then the determinant of the complex $F(\mc{V})_\bullet$ is equal to $(\Delta_{\alpha}^\gamma)^{\rank \mc{V}}$.
\end{proposition}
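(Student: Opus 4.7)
The plan is to adapt the argument of \cite[Proposition 9.1.3]{W} that underlies Proposition \ref{P:det}. The key idea is that the determinant of a generically exact bounded complex of free modules over a polynomial ring picks up, with multiplicities, the equations of the codimension-one components of the support of its homology.

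First I would verify that $F(\mc{V})_\bullet$ is generically exact. By the tensor analogue of the Basic Theorem of \cite{W}, one has $F(\mc{V})_\bullet \cong R\pi_*(\mc{K}\otimes p^*\mc{V})$ in the derived category, where $\mc{K}$ is the Koszul resolution of $\mc{O}_Z$ on $\Gr\sm{\alpha\\ \gamma}\times R_\alpha$. Since $\pi|_Z = q$, this gives $H_{-i}(F(\mc{V})_\bullet) \cong R^i q_*(p^*\mc{V})$ as graded $A$-modules, all of which are supported on $R_\gca$. Hence the complex becomes exact after inverting $\Delta_{\alpha}^\gamma$, and its determinant in the sense of \cite[Appendix A]{GKZ} is a well-defined nonzero element of $A$, up to a scalar in $k^*$. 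By irreducibility of $\Delta_{\alpha}^\gamma$, we conclude $\det F(\mc{V})_\bullet = c\cdot (\Delta_{\alpha}^\gamma)^n$ for some $c\in k^*$ and integer $n\geqslant 0$.

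Next I would identify $n$ by localizing at the generic point $\eta$ of $R_\gca$. The local ring $A_\eta$ is a DVR with uniformizer $\Delta_{\alpha}^\gamma$, and for a generically exact finite complex of free $A_\eta$-modules the order of its determinant equals the Euler characteristic of its homology,
$$n = \sum_{i} (-1)^i \ell_{A_\eta}\bigl((R^{-i}q_*(p^*\mc{V}))_\eta\bigr).$$
Because $q$ is birational, there is an open neighborhood $U$ of $\eta$ in $R_\gca$ over which $q$ is an isomorphism. Over $U$, the higher direct images $R^i q_*(p^*\mc{V})$ vanish for $i>0$, and $q_*(p^*\mc{V})$ is a locally free $\mc{O}_U$-module of rank $\rank \mc{V}$, since $p^*\mc{V}$ is a vector bundle of that rank on $Z$. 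Therefore only $i=0$ contributes, and $n = \ell_{A_\eta}(q_*(p^*\mc{V})_\eta) = \rank \mc{V}$.

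The main obstacle will be the rigorous justification of the order-of-vanishing formula for $\det F(\mc{V})_\bullet$ over the DVR $A_\eta$, together with pinning down the scalar $c$. The first point is a standard but delicate property of the determinant construction of \cite[Appendix A]{GKZ}; the second can be absorbed into the normalization of $\Delta_{\alpha}^\gamma$, or alternatively handled by exploiting the $G$-equivariance of the differentials in $F(\mc{V})_\bullet$ to force $c=1$. Modulo these technicalities, the argument is a direct translation of \cite[Proposition 9.1.3]{W}, with the tensor analogues of Theorems \ref{T:main} and \ref{T:dual} in place of their quiver counterparts.
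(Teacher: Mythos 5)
Your argument is correct and is essentially the paper's: the proposition (like its quiver analogue, Proposition \ref{P:det}) is proved by reference to \cite[Proposition 9.1.3]{W}, whose proof is exactly the divisor computation you carry out --- generic exactness of $F(\mc{V})_\bullet$, the identity $\op{div}(\det F(\mc{V})_\bullet)=\sum_i(-1)^i\op{div}H_i$, and localization at the generic point of the hypersurface, where birationality kills the higher direct images and gives $\ell_{A_\eta}\bigl((q_*p^*\mc{V})_\eta\bigr)=\rank\mc{V}$. The technical points you flag (the order-of-vanishing property of the GKZ determinant and the scalar normalization) are handled the same way there.
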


\begin{example} \label{ex:Tcodim1}
In this example, we find all hyper-polynomials of type $(\alpha,\gamma)$ up to some powers for $2\leqslant  \alpha_i \leqslant 5$ using determinantal complexes.
In contrast to the quiver type, we cannot find a weight such that the differential is linear for the two non-trivial cases below. In these cases the hyper-polynomials are not completely explicit.

\begin{align*}
&\quad \alpha=(3,4,5),\gamma=(2,3,2),\quad \deg(\Delta_\alpha^\gamma)=240.\\
&F_{-1}=(2,1^2;1^4;1^4),\\
&F_0=(0;0;0)\oplus (3,1^2;2,1^3;1^5)\oplus (5,3,2; 3^2,2^2;2^5),\ F_1=(5^2,2;3^4;3^2,2^3),\\
&F_0^{(2;3;-1)}=(2^2,1;3,2,1;0) \xleftarrow{\cdot(1;1;1)\oplus(3,1^2;3,1^2;1^5)} F_1^{(2;3;-1)}=(2^3;3,2^2,1)\oplus (4,3^2;3^3,2;1^5).\\
\\
&\quad \alpha=(4,4,4),\gamma=(2,2,3),\quad \deg(\Delta_\alpha^\gamma)=560.\\
&F_{-2}=(2^4;2^4;3^2,2),\\ &F_{-1}=(1^4;1^4;(1^4\oplus 2,1^2))\oplus(1^4;2,1^2;1^4)\oplus(2,1^2;1^4;1^4)\oplus (3,2^3;3,2^3;(3,2^3\oplus 3^2,2,1)),\\ &F_0=(0;0;0)\oplus(2,1^3;2,1^3;2,1^3)\oplus(3^2,2^2;4,2^3;3^2,2^2)\oplus(4,2^3;3^2,2^2;3^2,2^2),\\ & F_1=(4^2,2^2;4^2,2^2;3^4),\\
&F_0^{(2;0;1)}=(2^2;0;1) \xleftarrow{\cdot(2^2;1^4;2^2)\oplus (3^2,1^2;2^4;2^4)} F_1^{(2;0;1)}=(2^4;1^4;2^2,1)\oplus(4,3^2,2;2^4;3,2^3).\\
\end{align*}
It turns out the rest of the hyper-polynomials are in fact of quiver types. For the first three, this follows from the remark of Example \ref{ex:rigid}.
The conclusion on the rest is based on explicit computation. However, none of $q:Z\to R_\gca$ below is finite.
\begin{align*}
&\quad \alpha=(2,3,4),\gamma=(1,2,3)\quad (\text{same as } K_2, \alpha=(3,4),\gamma=(1,1)),\\
&\quad \alpha=(2,4,5),\gamma=(1,2,4)\quad (\text{same as } K_2, \alpha=(4,5),\gamma=(1,1)),\\
&\quad \alpha=(2,4,5),\gamma=(1,3,3)\quad (\text{same as } K_2, \alpha=(4,5),\gamma=(1,1)),\\
&\quad \alpha=(3,3,5),\gamma=(1,2,4)\quad (\text{same as } K_3, \alpha=(5,3),\gamma=(3,2)),\\
&\quad \alpha=(4,4,4),\gamma=(1,3,3)\quad (\text{same as } K_4, \alpha=(4,4),\gamma=(2,3)),\\
&\quad \alpha=(4,5,5),\gamma=(1,3,4)\quad (\text{same as } K_5, \alpha=(4,5),\gamma=(1,3)).
\end{align*}
\end{example}

\section*{Acknowledgement}
The author would like to thank Professor Jerzy Weyman for carefully reading the manuscript.

\bibliographystyle{amsplain}

\begin{thebibliography}{19}
\bibitem {BEKS} C. Berkesch Zamaere, D. Erman, M. Kummini, S. Sam, \textit{Tensor complexes: multilinear free resolutions constructed from higher tensors,} J. Eur. Math. Soc. (JEMS) 15 (2013), no. 6, 2257--2295.
\bibitem {DW2} H. Derksen, J. Weyman, \textit{The combinatorics of quiver representations,} Ann. Inst. Fourier (Grenoble) 61 (2011), no. 3, 1061--1131.
\bibitem {E} D. Eisenbud, \textit{Commutative Algebra with a View Toward Algebraic Geometry,} Graduate Texts in Mathematics 150, Springer-Verlag, 1995.
\bibitem {Fm} J. Fei, \textit{Moduli of representations I. Projections from quivers,} arXiv:1011.6106.
\bibitem {F} J. Fei, \textit{Kronecker coefficients from quivers,} Unpublished manuscript.
\bibitem {GKZ} I. M. Gelfand, M. M. Kapranov, A. V. Zelevinsky, \textit{Discriminants, resultants, and multidimensional determinants,} Mathematics: Theory \& Applications, Boston: Birkh\"{a}user, Boston, MA, 1994.
\bibitem {Ha} R. Hartshorne, \textit{Algebraic geometry,} Graduate Texts in Mathematics, no. 52, Springer, 1977.
\bibitem {Sa} S. Sam, \textit{Computing inclusions of Schur modules,} J. Softw. Algebra Geom. 1 (2009), 5--10.
\bibitem {S1} A. Schofield, \textit{Semi-invariants of quivers,} J. London Math. Soc. (2) 43 (1991), no. 3, 385--395.
\bibitem {S}  A. Schofield, \textit{General representations of quivers,} Proc. London Math. Soc. (3) 65 (1992), no. 1, 46--64.
\bibitem {Su} K. Sutar, \textit{Orbit closures of source-sink Dynkin quivers,} Int. Math. Res. Notices (2014), doi:10.1093/imrn/rnu037.
\bibitem {W} J. Weyman, \textit{Cohomology of Vector Bundles and Syzygies,} Cambridge Tracts in Mathematics 149, Cambridge University Press, 2003.
\end{thebibliography}

\end{document}